\DeclareFontFamily{OT1}{rsfs}{}
\DeclareFontShape{OT1}{rsfs}{n}{it}{<-> rsfs10}{}
\DeclareMathAlphabet{\mathscr}{OT1}{rsfs}{n}{it}
\newtheorem{theorem}{Theorem}[section]
\newtheorem{prop}[theorem]{Proposition}
\newtheorem{claim}[theorem]{Claim}
\newtheorem{conj}{Conjecture}
\theoremstyle{definition} }
\theoremstyle{remark} \newtheorem{remark}[theorem]{Remark}
\newtheorem{example}[theorem]{Example}}
\newcommand{\Pbb}{{\mathbb{P}}}
\newcommand{\Rbb}{{\mathbb{R}}}
\newcommand{\cL}{{\mathscr L}}
\newcommand{\cO}{{\mathscr O}}
\newcommand{\ua}{\underline a}
\newcommand{\ue}{\underline e}
\newcommand{\uf}{\underline f}
\newcommand{\ut}{\underline t}
\newcommand{\uv}{\underline v}
\newcommand{\uX}{\underline X}
\newcommand{\Til}[1]{{\widetilde{#1}}}
\newcommand{\qede}{\hfill$\lrcorner$}
\DeclareMathOperator{\Vol}{Vol}
\title{
Segre classes of monomial schemes
}
\author{Paolo Aluffi}
\address{
Mathematics Department, 
Florida State University,
Tallahassee FL 32306, U.S.A.
}
\email{aluffi@math.fsu.edu}
\begin{document}

\begin{abstract}
We propose an explicit formula for the Segre classes of monomial
subschemes of nonsingular varieties, such as schemes defined by
monomial ideals in projective space.  The Segre class is expressed as
a formal integral on a region bounded by the corresponding Newton
polyhedron. We prove this formula for monomial ideals in two variables
and verify it for some families of examples in any number of
variables.
\end{abstract}

\maketitle


\section{Introduction}\label{intro}

\subsection{}
The {\em excess numbers\/} of a subscheme~$S$ of projective space $\Pbb^n$ are 
roughly defined as the numbers of points of intersection in the complement of $S$ 
of $n$ general hypersurfaces of given degrees containing $S$. Many challenging open
enumerative problems, such as the problem of computing characteristic numbers for 
families of plane curves, may be stated in terms of excess numbers. Recently, the problem
of computing excess numbers has been raised in algebraic statistics and in view of
applications to machine learning and ideal regression.

The excess numbers of a subscheme $S$ may be computed from the push-forward of the 
{\em Segre class\/} $s(S,\Pbb^n)$ to~$\Pbb^n$. 
Segre classes are defined for arbitrary closed embeddings of schemes, and in a sense carry
all the intersection-theoretic information associated with the embedding (\cite{85k:14004}, 
Chapters~4 and~6). Thus, they
provide a general context applying in particular to the computation of excess numbers, 
and relating this problem directly with the well-developed tools of Fulton-MacPherson
intersection theory. On the other hand, the computation of 
Segre classes is challenging, and indeed the connection with excess numbers appears 
to have mostly been exploited in the reverse direction---providing algorithms for the 
computation of Segre classes starting from the explicit solution of enumerative problems 
by computer
algebra systems such as Macaulay2 (\cite{M2}). This strategy informs the author's 
implementation of an algorithm for Chern and Segre classes of subschemes of projective 
space in \cite{MR1956868}, 
as well as more recent work on algorithmic computations of these classes 
(\cite{MR2736356}, \cite{EJP}). As is usually the case with applications of computer 
algebra, such methods provide very useful tools for experimentations in small dimension, 
but do not lead to general results.

In this note we conjecture a general formula for the Segre class of a {\em monomial\/}
subscheme, in terms of a corresponding Newton polyhedron. The monomial case is
of independent interest, and in principle more general situations can be reduced to the 
monomial case by means of algebraic homotopies (\cite{arXiv1212.2249}). We prove 
the formula in the case of monomials in two variables in any nonsingular variety,
and verify it for some nontrivial examples in arbitrarily many variables.
The formula is expressed as a formal integral over the region bounded by a Newton 
polyhedron associated with the subscheme. This integral can be computed directly from
a subdivision of the region into simplices. 

\subsection{}
We now state the proposed formula precisely. Let $V$ be a nonsingular variety, and let $X_1,
\dots, X_n$ be nonsingular hypersurfaces meeting with normal crossings in $V$.
For $I=(i_1,\dots,i_n)$, we denote by $X^{I}$ the hypersurface obtained
by taking $X_j$ with multiplicity $i_j$, and call this hypersurface a `monomial' (supported
on $X_1,\dots, X_n$). A {\em monomial subscheme\/} $S$ of $V$ is an intersection of 
monomials $X^{I_k}$ supported on a fixed set of hypersurfaces. The exponents $I_k$
determine a (possibly unbounded) region $N$ in the orthant $\Rbb_{\ge 0}^n$ in $\Rbb^n$,
namely, the complement of the convex hull of the union of the orthants with origins
translated at $I_k$. We call this region the {\em Newton region\/} for the exponents $I_k$.

\begin{example}\label{introex}
For $n=2$ and monomials
\[
X^{(2,6)},\quad
X^{(3,4)},\quad
X^{(4,3)},\quad
X^{(5,2)},\quad
X^{(7,0)},\quad
\]
the Newton region $N$ is as in the following picture:
\begin{center}
\includegraphics[scale=.4]{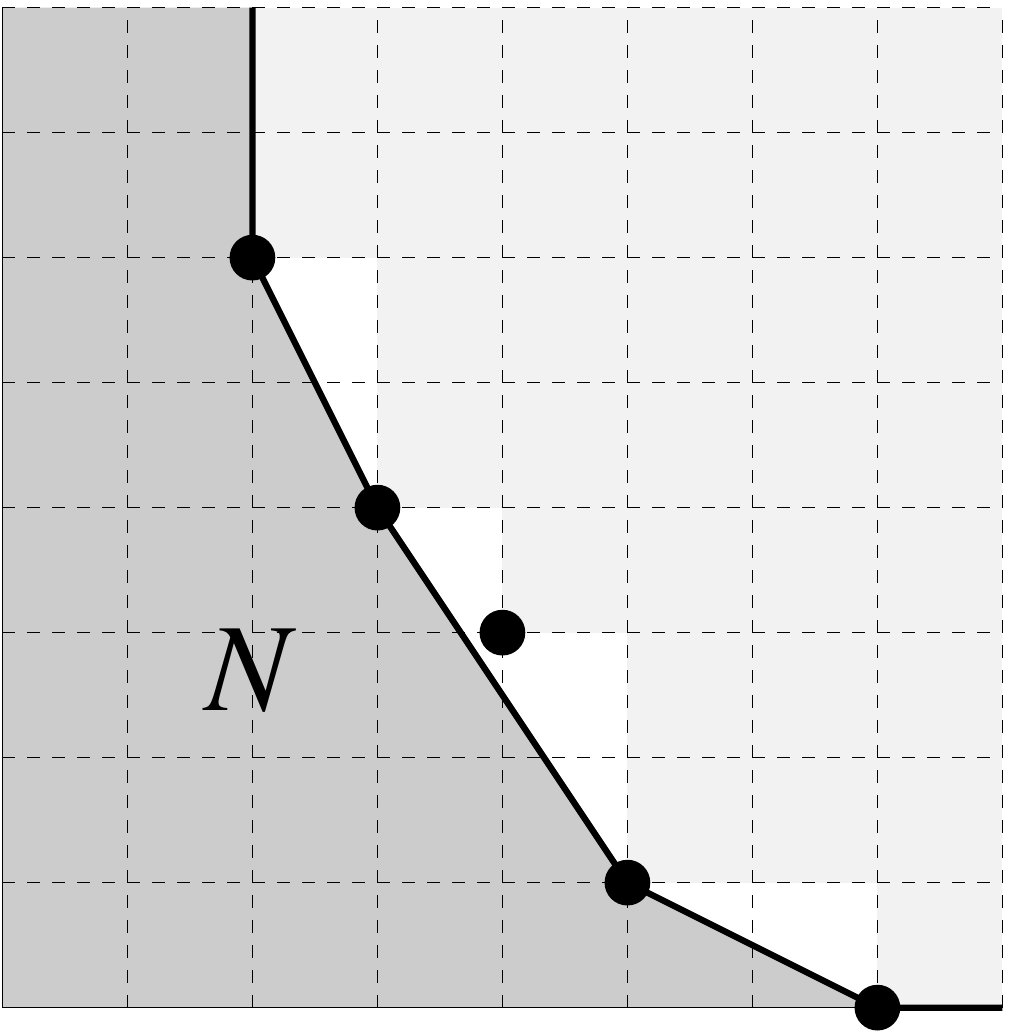}
\end{center}
The third monomial $X^{(4,3)}$ does not affect the Newton region, as it
is contained in the convex hull of the other translated quadrants. (Cf.~Remark~\ref{intdep}.)
\qede\end{example}

\begin{conj}\label{conj}
Let $\iota$ be the inclusion morphism $S\hookrightarrow V$. Then
\begin{equation}\label{eq:main}
\iota_* s(S,V)=\int_N \frac{n! X_1\cdots X_n\, da_1\cdots da_n}
{(1+a_1 X_1+\cdots +a_n X_n)^{n+1}}
\end{equation}
\end{conj}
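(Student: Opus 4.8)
\medskip

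I will sketch a proof of the case $n=2$, the case established in this note; the same strategy frames the general case, but its combinatorial heart becomes intractable for $n\ge 3$.

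The plan is to compute both sides after resolving the monomial ideal $I_S$ by an explicit toric modification, and then to match the resulting combinatorial expressions. A Segre class can be read off from any resolution of the defining ideal: if $\pi\colon\Til V\to V$ is proper birational with $I_S\cdot\cO_{\Til V}=\cO_{\Til V}(-D)$ for an effective Cartier divisor $D$, then
\[
\iota_*s(S,V)=\pi_*\bigl(D\,(1+D)^{-1}\bigr)=\pi_*\sum_{k\ge 1}(-1)^{k-1}D^k
\]
(the blow-up formula of \cite{85k:14004}, Chapter~4, applied to the blow-up of $V$ along $S$; replacing $\Til V$ by its normalization does not change the push-forward, by the projection formula, so after a further subdivision $\Til V$ may be taken nonsingular). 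Since $X_1,\dots,X_n$ meet with normal crossings, in suitable local coordinates $V$ looks like affine space with $X_j=\{x_j=0\}$ and $I_S$ a monomial ideal; its integral closure is governed by the Newton polyhedron $P=\Rbb^n_{\ge 0}\setminus N$, and the normalized blow-up of $I_S$ is the toric variety of the normal fan $\Sigma_0$ of $P$. These local models glue, since the construction only sees the stratification by the $X_j$; refining $\Sigma_0$ to a nonsingular fan $\Sigma$ produces $\pi\colon\Til V\to V$ with $D=\sum_\rho c_\rho D_\rho$ over the rays $\rho$ of $\Sigma$, where $c_\rho=\min_{v\in P}\langle u_\rho,v\rangle$ is the value on the primitive generator $u_\rho$ of the support function of $P$, and $D_\rho$ is the proper transform $\Til X_j$ when $u_\rho=e_j$, an exceptional divisor otherwise.

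Now take $n=2$. Away from $X_1\cap X_2$ the scheme $S$ is a multiple of a single smooth hypersurface $X_j$, for which $\iota_*s(aX_j,V)=aX_j/(1+aX_j)$ agrees with the right side by an elementary one-variable integral, so the essential computation is the one along $X_1\cap X_2$; I therefore take $\dim V=2$ (the case of general $\dim V$ being recovered by the same computation performed relatively over $X_1\cap X_2$, or by checking the resulting power-series identity in $[X_1],[X_2]$ on products of projective spaces, where all the blow-ups are torus-invariant). Then $\Sigma$ is a planar fan with rays $e_1=v_0,v_1,\dots,v_m=e_2$ ordered by angle, every $v_i$ with $0<i<m$ exceptional over a point of $X_1\cap X_2$; hence $\pi_*\Til X_j=X_j$, $\pi_*D_{v_i}=0$ for $0<i<m$, and $D^k=0$ on the surface $\Til V$ for $k\ge 3$, so $\iota_*s(S,V)=\pi_*D-\pi_*(D^2)$ with $\pi_*D=c_0X_1+c_mX_2$ (the exponents $c_0=\min_k(I_k)_1$, $c_m=\min_k(I_k)_2$ of the fixed divisorial part of $I_S$), and $\pi_*(D^2)$ computed from the standard toric intersection numbers $D_{v_{i-1}}\!\cdot D_{v_i}=1$, $D_{v_i}^2=-b_i$ with $v_{i-1}+v_{i+1}=b_iv_i$, together with the projection formula on the non-exceptional rays $v_0,v_m$.

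For the right side I would decompose $N$ into pieces sharing the vertex $0$: the compact edges of $\partial P$ (equivalently the interior rays of the normalized fan $\Sigma_0$) cut it into triangles $\sigma=\mathrm{conv}(0,w,w')$ with $w,w'$ consecutive vertices of $P$, together with at most two unbounded cones $\mathrm{conv}(0,w_\star)+\Rbb_{\ge 0}e_j$ over the unbounded edges of $P$ (present precisely when $c_j\neq 0$). For a triangle, the substitution $a=s\,w+t\,w'$ gives
\[
\int_\sigma\frac{2\,X_1X_2\,da_1\,da_2}{(1+a_1X_1+a_2X_2)^3}=\frac{2\,X_1X_2\,\Vol(\sigma)}{(1+w_1X_1+w_2X_2)(1+w'_1X_1+w'_2X_2)},
\]
and the analogous integral over an unbounded cone produces a term whose codimension-one part is exactly $c_jX_j$. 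Summing and expanding in $[X_1],[X_2]$, the claimed identity holds in codimension one and in its $X_1^2,X_2^2$ components by inspection, and what remains is the $X_1X_2$-component: that the alternating toric sum $\sum_{0<i<m}c_i^2 b_i-2\sum_{0\le i<m}c_ic_{i+1}$ coincides, up to boundary corrections coming from $\Til X_1,\Til X_2$, with $2\sum_\sigma\Vol(\sigma)$, together with the matching of the lower-order terms of the geometric series. This ``continued fraction equals volume'' identity is the crux; I expect it to fall out of a telescoping summation built from $v_{i-1}+v_{i+1}=b_iv_i$ and the linear interpolation $c_i=\psi_P(v_i)$ of the support function along each edge of $P$---a local refinement of Teissier's formula $e(I)=n!\,\Vol(N)$ for the multiplicity of an $\mathfrak m$-primary monomial ideal. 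The remaining points---convergence of the integral for real $X_i>0$ when $N$ is unbounded, the reduction to surfaces, the simplex integrals---are routine; the telescoping comparison is the main obstacle, and for $n\ge 3$ it, together with the loss of a uniform toric resolution, is what keeps the statement conjectural.
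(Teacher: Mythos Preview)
Your approach is genuinely different from the paper's, and worth comparing. The paper argues by induction on the number of blow-ups needed to principalize $S$: a single blow-up along $X_1\cap X_2$ writes $\pi^{-1}(S)$ as $mE$ plus two disjoint residual monomial schemes $S_1,S_2$; the residual intersection formula (in the form of \cite{MR96d:14004}, Proposition~3) breaks $s(\pi^{-1}(S),\Til V)$ into three pieces, and a short calculation using the $\otimes\,\cO(mE)$ formalism and the projection formula shows that these push forward to the integrals over the subregions $T$, $N'$, $N''$ of $N$. The induction hypothesis handles $S_1$ and $S_2$. Nothing in this argument sees $\dim V$: the residual formula and the push-forward identities hold in all codimensions simultaneously, so the result is established for arbitrary $V$.

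Your route---resolve $I_S$ in one stroke by the normal fan of $P$ and compute $\pi_*\sum_{k\ge 1}(-1)^{k-1}D^k$ against the triangulation of $N$ from the origin---is legitimate in principle, but two points remain open, and you flag both. First, the reduction to $\dim V=2$ is not justified: on a surface only $D$ and $D^2$ survive, whereas for general $V$ every $D^k$ contributes. Of your two suggested fixes, the relative computation over $X_1\cap X_2$ would need the intersection theory of the iterated blow-up written out carefully, and the ``check on products of projective spaces'' presupposes that $\iota_*s(S,V)$ is a \emph{universal} rational function in $X_1,X_2$---true, but itself requiring an argument (essentially the same toric push-forward you are trying to avoid). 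Second, and more seriously, the ``continued fraction equals volume'' identity you isolate as the crux is not proved; you expect it to telescope via $v_{i-1}+v_{i+1}=b_iv_i$, and that is plausible, but it is exactly the content of the theorem in your framework. The paper's inductive scheme sidesteps both difficulties: the single blow-up step trades the global telescoping for a one-line push-forward \eqref{pf}, and the $\otimes$ calculus carries all codimensions along automatically.
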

The right-hand side is interpreted by evaluating the integral formally with $X_1,\dots,X_n$ 
as parameters; the result is a rational function in $X_1,\dots,X_n$, with a well-defined
expansion as a power series in these variables. The claim in Conjecture~\ref{conj} is that 
evaluating the terms of this series as intersection products of the corresponding divisor 
classes in $V$ gives the push-forward $\iota_* s(S,V)$. 

\begin{example}\label{introex2}
Using Fubini's theorem (!)~to perform the integral for the monomials in Example~\ref{introex}
and taking $X_1=X_2=H$ (for example, the hyperplane class in projective space) gives
\begin{multline*}
\int_0^2 \left(\int_0^\infty \frac{2\, H^2\, da_2}{(1+(a_1+a_2)H)^3}\right) da_1
+\int_2^3 \left(\int_0^{10-2a_1} \frac{2\, H^2\, da_2}{(1+(a_1+a_2)H)^3}\right) da_1 \\
+\int_3^5 \left(\int_0^{\frac {17}2-\frac {3a_1}2} \frac{2\, H^2\, da_2}{(1+(a_1+a_2)H)^3}\right) da_1
+\int_5^7 \left(\int_0^{\frac 72-\frac {a_1}2} \frac{2\, H^2\, da_2}{(1+(a_1+a_2)H)^3}\right) da_1\\
=\frac{2H}{1+2H}+
\frac{10H^2(1+5H)}{(1+2H)(1+3H)(1+7H)(1+8H)}+
\frac{2H^2(5+27H)}{(1+3H)(1+5H)(1+6H)(1+7H)} \\
+\frac{2H^2}{(1+5H)(1+6H)(1+7H)}
=\frac{2H(1+30H+168H^2)}{(1+6H)(1+7H)(1+8H)}\quad.
\end{multline*}
See Example~\ref{introex3} below for an alternative way to evaluate this integral.
Expanding as a power series,
\[
\frac{2H(1+30H+168H^2)}{(1+6H)(1+7H)(1+8H)}
=2H+18H^2-334H^3+3714H^4-35278H^5+\cdots
\]
According to Conjecture~\ref{conj}, the scheme $S$ defined by the monomial ideal
\[
I=(x_1^2 x_2^6\,,\,
x_1^3 x_2^4\,,\,
x_1^4 x_2^3\,,\,
x_1^5 x_2\,,\,
x_1^7)
\]
in e.g., $\Pbb^5$ has Segre class
\[
\iota_* s(S,\Pbb^5) = 2[\Pbb^4] + 18 [\Pbb^3] - 334 [\Pbb^2]+ 3714 [\Pbb^1] - 35278 [\Pbb^0]\quad.
\]
This agrees with the output for $I$ of the Macaulay2 procedure computing Segre 
classes given in \cite{MR1956868}:
{\tt\begin{verbatim}
Macaulay2, version 1.4
with packages: ConwayPolynomials, Elimination, IntegralClosure, LLLBases,
               PrimaryDecomposition, ReesAlgebra, TangentCone
i1 : load("CSM.m2");
i2 : QQ[x0,x1,x2,x3,x4,x5];
i3 : time segre ideal (x1^2*x2^6,x1^3*x2^4,x1^4*x2^3,x1^5*x2,x1^7)
                      5        4       3      2
Segre class : - 35278H  + 3714H  - 334H  + 18H  + 2H
     -- used 44.1409 seconds
\end{verbatim}}
In terms of intersection numbers, the equivalence of the subscheme $S$ defined
by $I$ in the intersection of hypersurfaces of degree $d_1,\dots,d_5$
is the coefficient of $H^5$ in
\begin{equation}\label{eq:excessexample}
\left(\prod_{i=1}^5 (1+d_i H)\right)\cdot (2H+18H^2-334H^3+3714H^4-35278H^5)\quad,
\end{equation}
provided that the hypersurfaces cut out $S$ in a neighborhood of $S$. The excess number 
is the difference between this number and the B\'ezout number $d_1\cdots d_5$.
\qede\end{example}

\subsection{}
In this note we prove:

\begin{theorem}\label{mainthm}
Conjecture~\ref{conj} holds for $n\le 2$.
\end{theorem}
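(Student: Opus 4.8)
The cases $n\le 1$ are elementary. For $n=1$ the subscheme $S$ is a divisor $iX_1$ (where $x_1^{\,i}$ generates the monomial ideal), the Newton region is the segment $[0,i]$, and
\[
\int_0^{i}\frac{X_1\,da_1}{(1+a_1X_1)^2}=\frac{iX_1}{1+iX_1}=\iota_*s(iX_1,V);
\]
the case $n=0$ ($S=V$, both sides $[V]$) is vacuous. So the content is $n=2$, which I would attack as follows. Write $\Gamma$ for the convex hull of the translated orthants, so that $N=\Rbb^2_{\ge0}\setminus\Gamma$, and set $\uX=(X_1,X_2)$, $a\cdot\uX=a_1X_1+a_2X_2$. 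First, both sides of \eqref{eq:main} depend only on the integral closure of the ideal $\mathcal I$ of $S$: the right‑hand side because $N$ and $\Gamma$ do, and the left‑hand side by the standard fact that $s(S,V)$ is unchanged upon replacing $\mathcal I$ by its integral closure (both may be computed on a single normal proper birational model of $V$ over which $\mathcal I$, hence $\overline{\mathcal I}$, becomes invertible, and there the two pull back to the same invertible sheaf). Hence I may assume the $I_k$ are exactly the vertices of the polygon $\partial\Gamma$.

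Next I would fix a toric resolution. Let $\Sigma$ be a nonsingular fan in $\Rbb^2$ refining both $\{\mathrm{cone}(e_1,e_2)\}$ and the normal fan of $\Gamma$, obtained by the usual Euclidean‑algorithm (continued‑fraction) subdivision and arranged so that the primitive generator of the ray through each vertex of $\Gamma$ is a ray of $\Sigma$. Then for every maximal cone $\sigma=\mathrm{cone}(v,v')$ the part of $\partial\Gamma$ lying in $\sigma$ is a single (possibly empty) segment, so $N\cap\sigma$ is a triangle with a vertex at the origin, possibly unbounded. The fan $\Sigma$ induces $\pi\colon\Til V\to V$, a composition of blow‑ups along smooth codimension‑two centers lying over $X_1\cap X_2$; it is an isomorphism over $V\setminus S$, the divisors $D_{e_1},D_{e_2}$ are the strict transforms of $X_1,X_2$ while the other $D_\rho$ are exceptional, and $\pi^{-1}(S)$ is the simple normal crossing divisor $\Til S=\sum_\rho m_\rho D_\rho$ with $m_\rho=\min_k\langle v_\rho,I_k\rangle$ (the value at $v_\rho$ of the support function of $\Gamma$). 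By the birational invariance of Segre classes, $\iota_*s(S,V)=\pi_*s(\Til S,\Til V)=\pi_*\!\big(\tfrac{\Til S}{1+\Til S}\cap[\Til V]\big)$.

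Finally I would match the two sides cone by cone. On the integral side, the substitution $a=sv+tv'$ on each maximal cone $\sigma=\mathrm{cone}(v,v')$ together with a short partial‑fractions computation gives
\[
\int_N\frac{2X_1X_2\,da_1\,da_2}{(1+a\cdot\uX)^3}
=\sum_\sigma\frac{|\det(P_\sigma,Q_\sigma)|\,X_1X_2}{(1+P_\sigma\cdot\uX)(1+Q_\sigma\cdot\uX)}\,,
\]
where $P_\sigma,Q_\sigma$ are the points at which the rays $v,v'$ leave $N$ (points on edges of $\Gamma$, or, on an axis missing the corresponding pure power, the evident limit at infinity); since these cone‑contributions are additive under subdivision of $\Sigma$, the right‑hand side is equally a sum over the edges of $\Gamma$, with $P_\sigma,Q_\sigma$ lattice points. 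On the Segre side I would expand $s(\Til S,\Til V)=\sum_{k\ge1}(-1)^{k-1}{\Til S}^{k-1}\cap[\Til S]$ and push forward, using $\pi_*D_{e_i}=X_i$, $\pi_*D_\rho=0$ for exceptional $\rho$, the projection formula with $\pi^*X_i=\sum_\rho\langle v_\rho,e_i\rangle D_\rho$, and the intersection ring of $\Til V$ — whose only nontrivial products are $D_\rho\cdot D_{\rho'}$ for adjacent rays and the self‑intersections $D_\rho^{\,2}$, governed by the continued‑fraction numbers of $\Sigma$. Organizing the outcome as a sum over maximal cones and invoking the same additivity under refinement of $\Sigma$ (valid because $\pi_*s(\Til S,\Til V)$ does not depend on the chosen resolution) reduces the identity to a finite check on the finest subdivision. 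That check — reconciling the self‑intersection data of $\Sigma$ with the elementary rational functions produced by the integral, in the presence of the fact that the triangulation of $N$ from the origin is dual to, rather than identical with, the fan $\Sigma$ — is the main obstacle and the technical heart of the argument.
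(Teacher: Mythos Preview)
Your strategy is genuinely different from the paper's and, as written, incomplete at the crucial point. The paper does \emph{not} take a full toric resolution and then compare cone by cone. Instead it performs a \emph{single} blow-up at $X_1\cap X_2$, obtaining an exceptional divisor $E$ and disjoint proper transforms $\Til X_1,\Til X_2$. The inverse image $\pi^{-1}(S)$ contains $mE$ where $m=\min_k(i_{k1}+i_{k2})$, and the residual of $mE$ is a disjoint union of two monomial subschemes $S_1,S_2$ (supported on $\Til X_1,E$ and on $E,\Til X_2$), each requiring strictly fewer blow-ups to principalize. The key tool is the residual intersection formula
\[
s(\pi^{-1}(S),\Til V)=\frac{mE}{1+mE}+\frac{1}{1+mE}\cap\big(s(S_1,\Til V)\otimes\cO(mE)\big)+\frac{1}{1+mE}\cap\big(s(S_2,\Til V)\otimes\cO(mE)\big),
\]
and the point is that the operation $(-)\otimes\cO(mE)$ corresponds, under the integral, to an \emph{affine change of variables} $(\tilde a,e)\mapsto(\tilde a,e+m)$ with Jacobian~$1$. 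This matches the three terms above with the integral over the triangle $T$ with vertices $(0,0),(m,0),(0,m)$ and over the two remaining pieces $N',N''$ of $N$, and the conjecture for $S_1,S_2$ gives the conjecture for $S$ by induction. No global bookkeeping in the intersection ring of a full resolution is needed.

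By contrast, you resolve all at once and then try to reconcile $\pi_*\big(\Til S/(1+\Til S)\big)$ with the cone-by-cone decomposition of the integral. The formula you write for the integral over $N$ is essentially the simplex formula, which is fine; but on the Segre side you stop exactly where the work is: you yourself identify the matching of the self-intersection data of the $D_\rho$ with the rational functions from the integral as ``the main obstacle and the technical heart of the argument,'' and you do not carry it out. That is a genuine gap, not a formality. Two further caveats: first, $\Til V$ is not a toric variety (only the configuration of the $D_\rho$ over $X_1\cap X_2$ is toric-like), so statements about ``the intersection ring of $\Til V$'' must be understood relatively and phrased via the two pull-back relations $\pi^*X_i=\sum_\rho\langle v_\rho,e_i\rangle D_\rho$ rather than as a description of a Chow ring; second, the handling of unbounded regions (``the evident limit at infinity'') needs to be made precise, as Proposition~\ref{calc} does in the paper. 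Your approach is not wrong in principle, but as it stands it is a plan rather than a proof; the paper's one-blow-up-plus-residual-intersection induction is what actually makes the identity fall out without the combinatorial matching you defer.
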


The proof of Theorem~\ref{mainthm} is a direct application of techniques in Fulton-MacPherson
intersection theory. Concerning the validity of \eqref{eq:main} for $n>2$, complete intersections 
of monomials $x_1^{m_1},\dots, x_n^{m_n}$ give a straightforward, but maybe too simple 
example (\S\ref{ci}). We give more evidence 
for this formula in terms of a family of nontrivial examples for arbitrary $n$, namely the monomial 
ideals corresponding to the exponents
\[
(0,1,\dots,1),\quad (1,0,1,\dots,1),\quad \dots, \quad (1,\dots, 1,0)\quad.
\]
For these examples we can compute independently the Segre class using the relation 
between Segre classes of singularity subschemes and Chern-Schwartz-MacPherson
classes (\cite{MR2001i:14009}), and we find (Proposition~\ref{nontri}) that the expression 
we obtain does match the result of applying the formula given in Conjecture~\ref{conj}. 

\subsection{}
Excess numbers of monomial ideals admit an expression in terms of mixed volumes of
polytopes, via Bernstein's theorem; the example of the monomial ideal 
$(x_1^{p_1},\dots,x_k^{p_k})$ is worked out explicitly in~\cite{arXiv1212.2249}. Thus, the 
expression obtained in~\eqref{eq:excessexample} may be interpreted as a computation 
of the mixed volumes of certain polytopes in terms of the integral in~\eqref{eq:main}, for
the monomial subscheme of Example~\ref{introex}. Conversely, Bernstein's theorem 
may offer a path to the proof of the conjecture stated above for $n>2$, at least if the 
classes $X_i$ are ample enough. We do not pursue this approach here; Bernstein's
theorem is not used in our proof of Theorem~\ref{mainthm}. 

A precise relation between Segre classes, volumes of convex bodies, and integrals such 
as those appearing in Conjecture~\ref{conj} would be very valuable. Formula \eqref{eq:main} 
(if verified) suggests that the Segre class of the scheme defined by an ideal $I$ may be
computed as a suitable integral over a region in $\Rbb_{\ge 0}^n$ associated with $I$.
The natural guess is that the convex bodies appearing in the work of Lazarsfeld and 
Musta{\c{t}}{\u{a}} (\cite{MR2571958}) and Kaveh and Khovanskii (\cite{MR2950767}) 
would play a key role in such a  result.

As mentioned above, current algorithms for Segre classes essentially reduce the 
computation to enumerative problems, which are then solved by methods in computer
algebra. This limits substantially the scope of these algorithms, and runs against one
of the main applications of Segre classes: in principle one would want to compute Segre 
classes in order to solve hard enumerative problems, not the other way around. 
Formulas such as \eqref{eq:main} do not rely on enumerative information, so they 
have the potential for broader applications.

\subsection{}
We end this introduction by noting that the integrals appearing in \eqref{eq:main} have
the following property: if $T$ is an $n$-dimensional simplex, then
\begin{equation}\label{simplex}
\int_T \frac{n! X_1\cdots X_n\, da_1\cdots da_n}
{(1+a_1 X_1+\cdots +a_n X_n)^{n+1}}
=\frac{n! \Vol(T)\, X_1\cdots X_n}
{\prod_{(a_1,\dots,a_n)} (1+a_1 X_1+\cdots a_n X_n)}
\end{equation}
where the product ranges over the vertices of $T$. An analogous expression may be
given for unbounded regions dominating a simplex in lower dimension; see Proposition~\ref{calc}. 
Thus, one may
compute the integral in \eqref{eq:main} by splitting the region $N$ in any way into 
(possibly unbounded) simplices and applying~\eqref{simplex}. 

\begin{example}\label{introex3}
The computation carried out in Example~\ref{introex2} may be performed by splitting
the region $N$ as a union of triangles and one unbounded region as follows:
\begin{center}
\includegraphics[scale=.4]{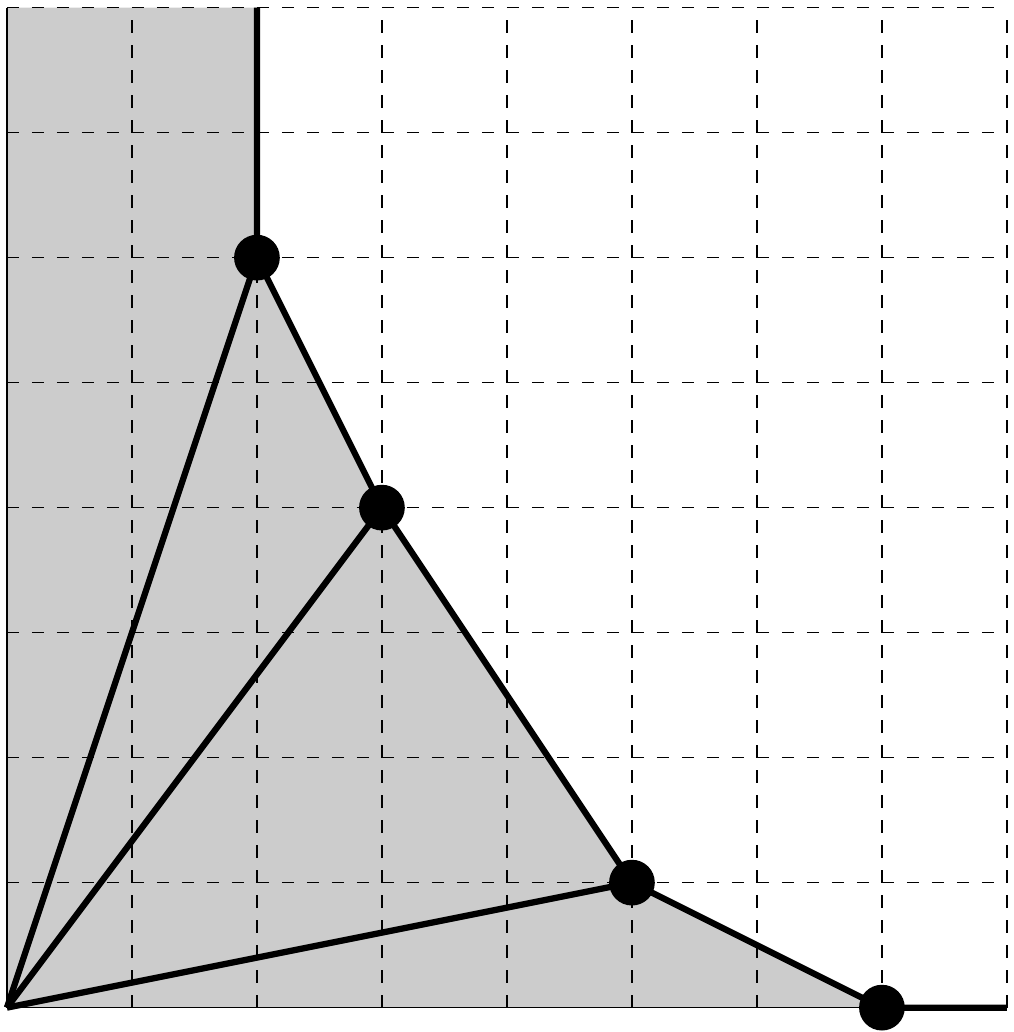}
\end{center}
Each triangle contribues a rational function according to \eqref{simplex}:
\begin{multline*}
\frac{2H}{(1+8H)}+\frac{10H^2}{(1+8H)(1+7H)}+\frac{17H^2}{(1+7H)(1+6H)}
+\frac{7H^2}{(1+6H)(1+7H)} \\
=\frac{2H(1+30H+168H^2)}{(1+6H)(1+7H)(1+8H)}
\end{multline*}
(the first term accounts for the unbounded region). This reproduces the result obtained
in Example~\ref{introex2}.
\qede\end{example}

This approach may in fact be taken as an alternative interpretation of the meaning of the 
right-hand side of~\eqref{eq:main}. What is possibly surprising from this point of view, and is
transparent from the interpretation as an integral, is that the result {\em does not depend 
on the chosen decomposition.\/}

Theorem~\ref{mainthm} is proven in \S\ref{proof}. Generalizations of \eqref{simplex} and 
examples giving evidence for the validity of \eqref{eq:main} for monomial ideals in 
arbitrarily many variables are discussed in~\S\ref{examples}.

\subsection{Acknowledgments}
The author's research is partially supported by a Simons collaboration grant.


\section{Proof of Theorem~\ref{mainthm}}\label{proof}

\subsection{Principal monomial ideals}\label{princip}
We maintain the notation used in the introduction. Notice that
\[
\int_{\Rbb_{\ge 0}^n} \frac{n!\,X_1\cdots X_n\, da_1\cdots da_n}
{(1+a_1 X_1+\cdots + a_n X_n)^{n+1}}
=1\quad;
\]
this follows from a simple induction. More generally, if $i_j\ge 0$ and $M$ 
denotes the region defined by the inequalities $a_1\ge i_1,\dots, a_n\ge i_n$, 
then
\[
\int_M \frac{n!\,X_1\cdots X_n\, da_1\cdots da_n}
{(1+a_1 X_1+\cdots + a_n X_n)^{n+1}}
=\frac 1{1+i_1 X_1+\cdots + i_n X_n}\quad.
\]
(Again, this is a simple induction; see Proposition~\ref{calc} for generalizations.)
\begin{center}
\includegraphics[scale=.4]{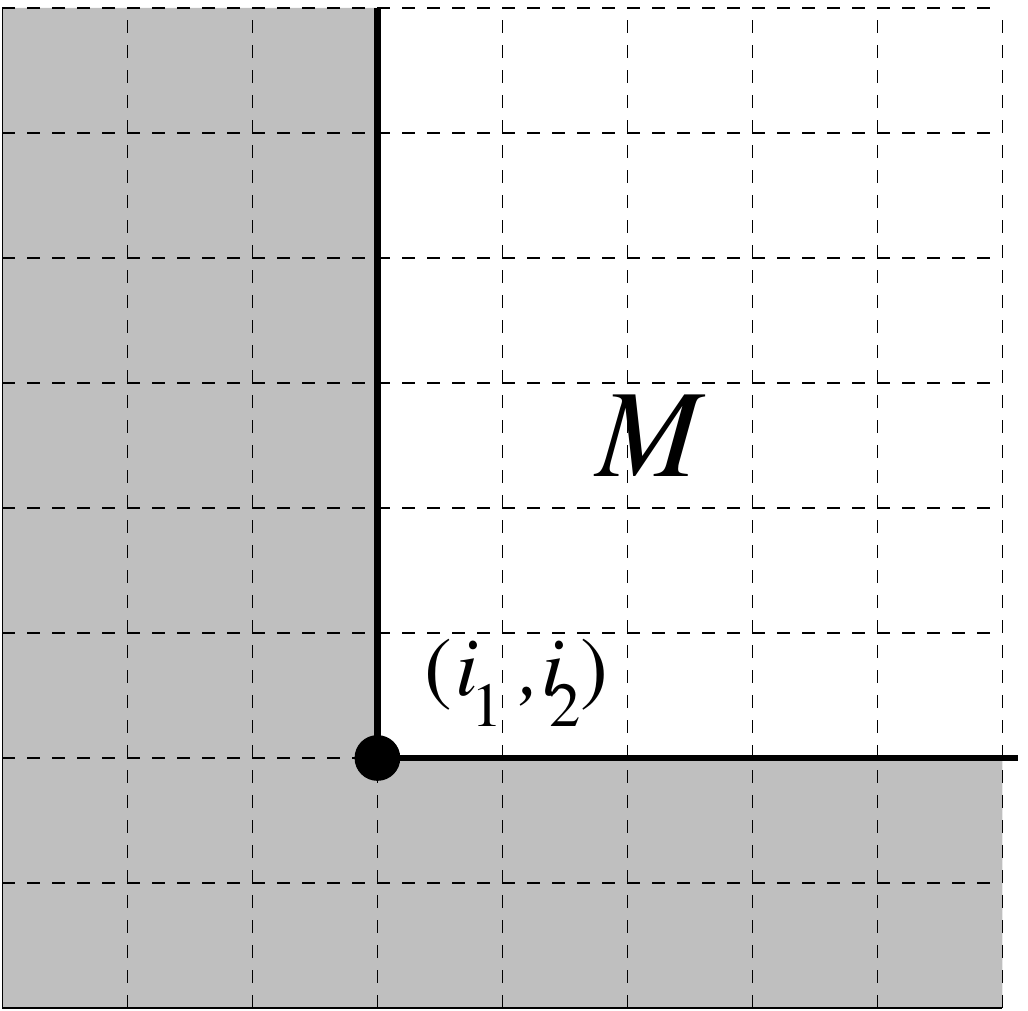}
\end{center}

Now consider the subscheme $S\overset \iota \hookrightarrow V$ defined by a 
principal ideal $(X^I)$ generated by a single monomial, with $I=(i_1,\dots,i_n)$. 
Then $S$ is a Cartier divisor, with normal bundle 
\[
N_SV \cong \cO(i_1 X_1 + \cdots + i_n X_n)\quad,
\]
and therefore
\[
\iota_* s(S,V) = c(N_SV)^{-1}\cap [S] = \frac{i_1 X_1+\cdots + i_n X_n}{1+i_1 X_1+\cdots
+i_n X_n}\quad.
\]
The integral from equation~\ref{eq:main} for this example is
\[
\int_{\Rbb_{\ge 0}^n\smallsetminus M} \frac{n!\, X_1\cdots X_n\, da_1\cdots da_n}
{(1+a_1 X_1+\cdots + a_n X_n)^{n+1}}=1-\frac 1{1+i_1 X_1+\cdots + i_n X_n}
=\frac {i_1 X_1+\cdots + i_n X_n}{1+i_1 X_1+\cdots + i_n X_n}
\]
verifying Conjecture~\ref{conj} for principal monomial ideals.

In particular, this verifies Conjecture~\ref{conj} for $n=1$.

\subsection{$n=2$}
Monomial ideals in two variables can be principalized by a sequence of blow ups along
codimension~2 loci. (In fact, {\em every\/} monomial ideal may be principalized by 
blowing up codimension~2 loci, cf.~\cite{MR2165388}.) A principalization algorithm
may be described as follows: if $S$ is a monomial ideal supported on $X_1, X_2$ in $V$, 
let $\pi: \Til V \to V$ be the blow-up of $V$ along $X_1\cap X_2$ (which is nonsingular
by hypothesis), let $E$ be the exceptional divisor, and $\Til X_1, \Til X_2$ the proper
transforms of $X_1$, $X_2$. (Note that $\Til X_1$, $\Til X_2$ are disjoint.)
Up to a principal component supported on $E$, $\pi^{-1}(S)$ 
is again a union of disjoint monomial subschemes $S_1, S_2$. Iterating this process on 
$S_1$, $S_2$ leads to a sequence of blow-ups principalizing $S$.

Therefore, the $n=2$ case of Conjecture~\ref{conj} may be proven by 
showing that the validity of the conjecture for $S_1$, $S_2$ implies that \eqref{eq:main}
holds for $S$: indeed, induction on the number of blow-ups needed for a principalization
reduces the conjecture to the principal case, which has been verified in \S\ref{princip}.
We carry out this strategy in the rest of this section.

Let $S$ be defined by monomials $X^{I_k}$, with $I_k=(i_{k1},i_{k2})$, $k=1,\dots, r$. The 
Newton region $N$ is the complement of the convex hull of the union of the $(i_{k1},
i_{k2})$ translations of the positive quadrants:
\begin{center}
\includegraphics[scale=.4]{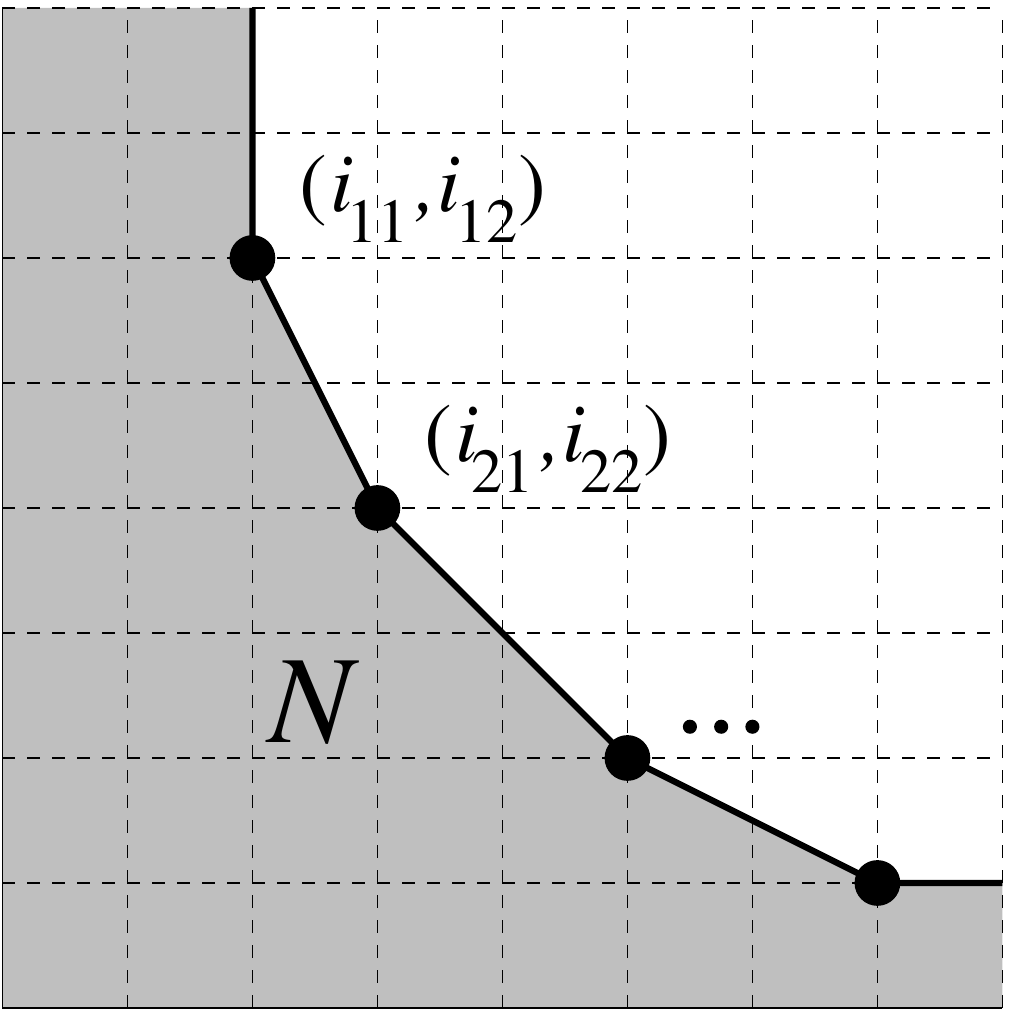}
\end{center}
For $n=2$, \eqref{eq:main} states that
\[
\iota_* s(S,V) = \int_N \frac{2\, X_1 X_2\, da_1 da_2}{(1+a_1 X_1 + a_2 X_2)^3}\quad,
\]
where $\iota: S\hookrightarrow V$ is the embedding. With $\pi: \Til V\to V$ as above,
let $j$ be the inclusion of $\pi^{-1}(S)$ in $\Til V$. 
\[
\xymatrix{
\pi^{-1}(S) \ar@{^(->}[r]^-j \ar[d] & \Til V \ar[d]^\pi \\
S \ar@{^(->}[r]^\iota & V
}
\]
By the birational invariance of Segre classes (Proposition~4.2 (a) in \cite{85k:14004}),
\[
\iota_* s(S,V) = \pi_* j_* s(\pi^{-1}(S),\Til V)\quad.
\]
The scheme $\pi^{-1}(S)$ contains a copy of the exceptional divisor $E$ with multiplicity 
$m$, where $m$ is the minimum of $i_{k1}+i_{k2}$ for $k=1,\dots, r$. 
The region $N$ is split into three areas: the triangle $T$ with vertices $(0,0),
(0,m),(m,0)$, and the two (possibly empty) top-left ($N'$) and bottom-right ($N''$) components 
of the complement of this triangle:
\begin{center}
\includegraphics[scale=.4]{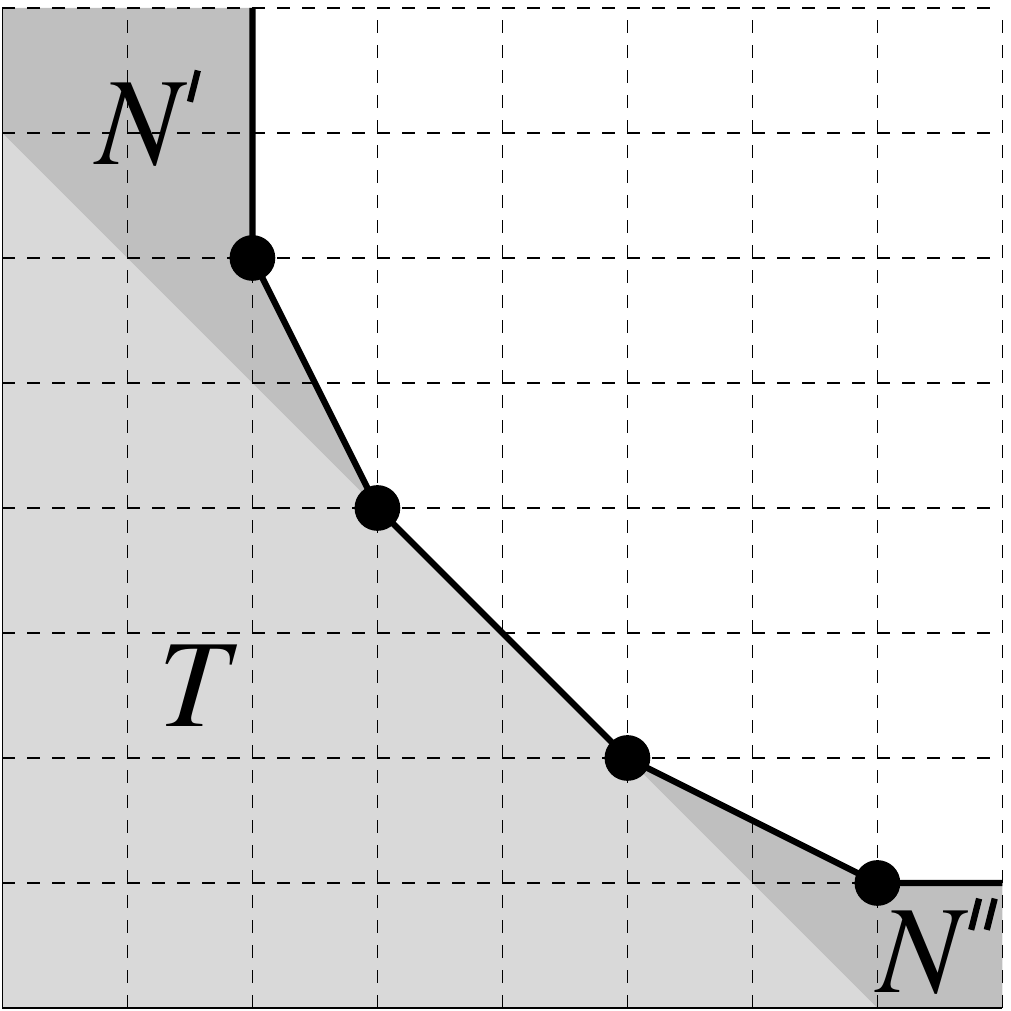}
\end{center}

The residual of $mE$ in $\pi^{-1}(S)$ consists of two disjoint subschemes $S_1$, $S_2$ 
of $\Til V$. These are monomial subschemes, supported respectively on $\Til X_1, E$ 
and $E, \Til X_2$. 

\begin{claim}\label{expo}
Exponents for $S_1$, resp.~$S_2$ are
\[
(i_{k1},i_{k1}+i_{k2}-m), \quad k=1,\dots,r,
\quad\text{resp.}\quad
(i_{k1}+i_{k2}-m,i_{k2}), \quad k=1,\dots,r\quad.
\]
\end{claim}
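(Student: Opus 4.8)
The plan is to compute the exceptional divisor $E$ and the proper transforms $\Til X_1$, $\Til X_2$ explicitly in local coordinates on the blow-up, and then read off the order of vanishing of each pulled-back monomial along these three divisors. Working locally, $X_1 \cap X_2$ is cut out by two of the coordinates, say $x_1, x_2$, and the monomial $X^{I_k}$ corresponds locally to $x_1^{i_{k1}} x_2^{i_{k2}}$. The blow-up of $V$ along $X_1 \cap X_2$ is covered by two charts. In the first chart, with coordinates $(u, v)$ such that $x_1 = u$, $x_2 = uv$, the exceptional divisor $E$ is $\{u = 0\}$, the proper transform $\Til X_2$ is $\{v = 0\}$, and $\Til X_1$ does not meet this chart. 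In the second chart, with $x_1 = u'v'$, $x_2 = v'$, we have $E = \{v' = 0\}$, $\Til X_1 = \{u' = 0\}$, and $\Til X_2$ is absent.

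First I would pull back $X^{I_k} = x_1^{i_{k1}} x_2^{i_{k2}}$ to the first chart: it becomes $u^{i_{k1}} (uv)^{i_{k2}} = u^{i_{k1}+i_{k2}} v^{i_{k2}}$, so $\pi^{-1}(S)$ in this chart is cut out by the monomials $u^{i_{k1}+i_{k2}} v^{i_{k2}}$, $k = 1,\dots,r$. The divisor $E = \{u=0\}$ appears in every generator with exponent $i_{k1}+i_{k2}$, so the gcd-type common factor is $u^m$ with $m = \min_k(i_{k1}+i_{k2})$, exactly as stated in the excerpt. Factoring out $u^m$ leaves the residual scheme $S_2$, defined by the monomials $u^{i_{k1}+i_{k2}-m} v^{i_{k2}}$; since $E$ corresponds to $u$ and $\Til X_2$ to $v$, the exponents of $S_2$ with respect to $(E, \Til X_2)$ are $(i_{k1}+i_{k2}-m, i_{k2})$, $k=1,\dots,r$. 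The symmetric computation in the second chart gives $X^{I_k} \mapsto (u'v')^{i_{k1}} (v')^{i_{k2}} = u'^{i_{k1}} v'^{i_{k1}+i_{k2}}$, common factor $v'^m$, and residual scheme $S_1$ defined by $u'^{i_{k1}} v'^{i_{k1}+i_{k2}-m}$; with $\Til X_1$ corresponding to $u'$ and $E$ to $v'$, the exponents of $S_1$ with respect to $(\Til X_1, E)$ are $(i_{k1}, i_{k1}+i_{k2}-m)$, $k=1,\dots,r$. This matches the claim. Finally I would note that $\Til X_1$ and $\Til X_2$ are disjoint because they live in different charts (equivalently, their strict transforms no longer meet after separating the center), so $S_1$ and $S_2$ are indeed disjoint subschemes of $\Til V$, and that $\Til X_1$, $E$ (resp. $E$, $\Til X_2$) still meet with normal crossings, so the setup of Conjecture~\ref{conj} applies to each $S_i$.

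I expect the main obstacle to be bookkeeping rather than anything deep: one must be careful that the residual scheme is genuinely defined by the quotient ideal $(\pi^{-1}(S) : mE)$ and that, for monomial ideals, this quotient is again monomial with exponents obtained by the termwise subtraction above — in particular that no new generators or cancellations appear, which is where the fact that $m$ is the \emph{minimum} of the totals (so that $i_{k1}+i_{k2}-m \ge 0$ for all $k$, with equality for at least one $k$) is used. A secondary point to address cleanly is the patching of the two charts: one should check that the local pictures glue to a global statement about the divisors $E$, $\Til X_1$, $\Til X_2$ on $\Til V$, which is standard for blow-ups along smooth centers of a normal crossings configuration, but deserves a sentence. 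With these points settled, the claim follows directly from the local coordinate computation.
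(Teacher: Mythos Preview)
Your argument is correct and is essentially the same as the paper's: both compute the total transform $\pi^*(i_{k1}X_1+i_{k2}X_2)$ near each piece, then subtract $mE$ to obtain the residual. The only difference is notational: the paper works additively at the level of divisors, using $\pi^{-1}(X_j)=\Til X_j+E$ and the observation that near $S_1$ one has $\pi^{-1}(X_2)=E$ (since $\Til X_2$ is absent there), whereas you carry out the same computation multiplicatively in explicit affine charts of the blow-up.
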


\begin{proof}
In a neighborhood of $S_1$, $\pi^{-1}(S)$ is the intersection of divisors denoted additively as
\[
i_{k1} \pi^{-1}(X_1) + i_{k2} \pi^{-1}(X_2) = i_{k1} (\Til X_1 +E) + i_{k2} E
=i_{k1}(\Til X_1) + (i_{k_1}+i_{k_2}) E
\]
(since $S_1$ is disjoint from $\Til X_2$, $\pi^{-1}(X_2)$ agrees with $E$ near $S_1$). 
The monomial scheme $S_1$ is obtained as the residual of $mE$ in this intersection, 
with corresponding exponents as stated. The analysis is identical near $S_2$.
\end{proof}

By residual intersection (\cite{85k:14004}, Proposition~9.2, in the form given 
in~\cite{MR96d:14004}, Proposition~3), $s(\pi^{-1}(S),\Til V)$ equals
\[
\frac{mE}{1+mE} + \frac 1{1+mE}\cap (s(S_1,\Til V)\otimes \cO(mE))
+\frac 1{1+mE}\cap (s(S_2,\Til V)\otimes \cO(mE))\quad.
\]
(Here and in what follows we use freely notation as in~\cite{MR96d:14004}, \S2.)
Therefore, $i_* s(S,V)$ is naturally the sum of three terms: the push-forwards to $V$ of
\[
(\text{i})\, \frac{mE}{1+mE}, \quad
(\text{ii})\, \frac 1{1+mE}\cap (s(S_1,\Til V)\otimes \cO(mE)), \quad
(\text{iii})\, \frac 1{1+mE}\cap (s(S_2,\Til V)\otimes \cO(mE))\quad.
\]
The following claim will conclude the proof of Theorem~\ref{mainthm}.

\begin{claim}\label{keyclaim}
The terms (i), resp.~(ii), (iii) push-forward to the values of the integral on the subregions 
$T$, resp.~$N'$, $N''$ of $N$ determined above.
\end{claim}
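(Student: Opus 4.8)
The plan is to verify the three push-forwards in Claim~\ref{keyclaim} one at a time, translating each intersection-theoretic term into the corresponding piece of the integral over $N$.

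\textbf{Term (i).} First I would compute $\pi_*\bigl(\frac{mE}{1+mE}\bigr)$ directly. On the blow-up $\pi:\Til V\to V$ of $V$ along $Z=X_1\cap X_2$, the exceptional divisor $E$ is a $\Pbb^1$-bundle over $Z$, with $\cO_E(E)=\cO(-1)$ relative to this bundle and $[E]|_E$ restricting on fibers to $-[\mathrm{pt}]$. The class $\frac{mE}{1+mE}=\sum_{j\ge 1}(-1)^{j-1}m^j E^j$ pushes forward using $\pi_*(E^j)$, which is a standard computation: $\pi_*(E^j)=(-1)^{j-1}(s_{j-2}(N_ZV)\cap[Z])$ for $j\ge 2$ and $\pi_*E=0$, where $N_ZV$ has Chern roots $X_1,X_2$ (restricted to $Z$). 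Assembling the series, $\pi_*\bigl(\frac{mE}{1+mE}\bigr)$ becomes a rational expression in $X_1,X_2$ times $[Z]=X_1X_2\cap[V]$. On the other side, the integral over the triangle $T$ with vertices $(0,0),(0,m),(m,0)$ is computed by \eqref{simplex}, giving $\frac{2\,\Vol(T)\,X_1X_2}{(1+mX_1)(1+mX_2)}=\frac{m^2\,X_1X_2}{(1+mX_1)(1+mX_2)}$. I would check these two rational functions agree; this is the cleanest of the three matches and essentially pins down the normalization in \eqref{eq:main}.

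\textbf{Terms (ii) and (iii).} These are symmetric, so I would do (ii) and note (iii) follows by interchanging $X_1\leftrightarrow X_2$. By induction, Conjecture~\ref{conj} holds for $S_1\subset\Til V$, so by Claim~\ref{expo},
\[
j_{1*}s(S_1,\Til V)=\int_{N_1}\frac{2\,\Til X_1\,E\,da_1\,da_2}{(1+a_1\Til X_1+a_2 E)^3},
\]
where $N_1$ is the Newton region in the $(a_1,a_2)$-plane for the exponents $(i_{k1},\,i_{k1}+i_{k2}-m)$. Applying the tensor formula (from \cite{MR96d:14004}, \S2) for $-\otimes\cO(mE)$ and then multiplying by $\frac1{1+mE}$ turns this into an integral whose integrand has denominator $(1+a_1\Til X_1+(a_2+\text{shift})E)^3$ — the key mechanism is that $\otimes\cO(mE)$ implements a translation in the $a_2$-variable by $m$ inside the integral, which is exactly the inverse of the substitution relating the exponents of $S_1$ to those of $S$. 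Then I would push forward by $\pi$: since near $S_1$ we have $\pi^{-1}(X_1)=\Til X_1+E$ and $\pi$ contracts $E$ appropriately, the projection formula combined with $\pi_*(\Til X_1^p E^q)$ (computable via $\Til X_1|_E$ and the $\Pbb^1$-bundle structure) converts the integral over $N_1$ into the integral over the top-left region $N'$ with integrand $\frac{2\,X_1X_2\,da_1\,da_2}{(1+a_1X_1+a_2X_2)^3}$. The bookkeeping is that the change of variables $a_1\mapsto a_1$, $a_2\mapsto a_1+a_2-m$ carries $N'$ (a region in the $(a_1,a_2)$-plane over $V$) to $N_1$ (the Newton region over $\Til V$), with Jacobian $1$, and it carries $X_1\mapsto\Til X_1$, $X_2\mapsto E$ at the level of divisor classes after the blow-up.

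\textbf{Main obstacle.} The hard part will be managing the interaction between the three operations on term (ii): the tensor by $\cO(mE)$, the cap with $\frac1{1+mE}$, and the push-forward $\pi_*$, all while keeping track of how $\Til X_1$ and $E$ relate to $X_1$, $X_2$ upstairs. The cleanest route is probably to not push forward class-by-class but to recognize that the entire composite operation (tensor, divide by $1+mE$, push down) corresponds on the integral side to a single explicit affine change of coordinates in the $a_i$ together with the substitution of Chern classes; verifying that the boundary of $N_1$ maps correctly to the boundary of $N'$ (including the unbounded directions, using the version of \eqref{simplex} for unbounded regions from Proposition~\ref{calc}) is where one must be careful. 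I would organize this by first checking it on a single simplex in the decomposition of $N'$ — where \eqref{simplex} gives a closed product formula — and then invoking decomposition-independence (noted after Example~\ref{introex3}) to conclude for all of $N'$.
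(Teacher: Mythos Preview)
Your proposal is correct and follows essentially the same route as the paper: for (i) one matches $\pi_*\bigl(\tfrac{mE}{1+mE}\bigr)=\tfrac{m^2X_1X_2}{(1+mX_1)(1+mX_2)}$ with the triangle integral via~\eqref{simplex}, and for (ii) one applies the induction hypothesis to $S_1$, uses the $\otimes\,\cO(mE)$ formula from \cite{MR96d:14004} to shift the $E$-coordinate by $m$, invokes the affine change of variables $(\tilde a,e)=(a_1,a_1+a_2-m)$ with Jacobian~$1$ carrying $N'$ to $N_1$, and then pushes forward. Your ``main obstacle'' dissolves more cleanly than you anticipate: rather than computing $\pi_*(\Til X_1^{\,p}E^q)$ term by term or checking simplex-by-simplex, the paper observes that $\tilde a\,\Til X_1+(e+m)E=\pi^*(a_1X_1+a_2X_2)-(e-\tilde a+m)\Til X_2$, so on the class $\Til X_1\cdot E$ (where $\Til X_1\cdot\Til X_2=0$) the denominator is already a pull-back, and the projection formula together with $\pi_*(\Til X_1\cdot E)=X_1\cdot X_2$ pushes the integrand forward directly.
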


\begin{proof}
---(i): By the birational invariance of Segre classes, the push-forward of $E/(1+E)$
is the Segre class of the center of the blow-up. Therefore, the push-forward of $mE/(1+mE)$ 
is the $m$-th Adams of this Segre class. Since the center is the complete intersection of
$X_1$ and $X_2$, this is given by
\[
\frac {mE}{1+mE} \mapsto \frac {m^2 X_1\cdot X_2}{(1+mX_1)(1+mX_2)}\quad.
\]
(The Segre class of a complete intersection equals the inverse Chern class of its normal 
bundle.) The claim is that this expression equals the integral
\[
\int_T \frac{2 X_1X_2\, da_1\, da_2}{(1+a_1 X_1+a_2 X_2)^3} 
\]
where $T$ is the triangle with vertices $(0,0)$, $(0,m)$, $(m,0)$. The verification of
this fact is a trivial calculus exercise. (See Proposition~\ref{calc} for a generalization.)
\smallskip

---(ii): Term (ii) is
\[
\frac 1{1+mE}\cap (s(S_1,\Til V)\otimes \cO(mE))\quad,
\]
where $S_1$ is monomial on $\Til X_1$, $E$ with exponents $(i_{k1},i_{k1}+i_{k2}-m)$ as
observed in Claim~\ref{expo}. Each vertex of the Newton polyhedron
for $S$ determines a vertex for $S_1$ by the transformation $(a_1,a_2)\mapsto
(\tilde a,e)=(a_1, a_1+a_2-m)$.
\begin{center}
\includegraphics[scale=.35]{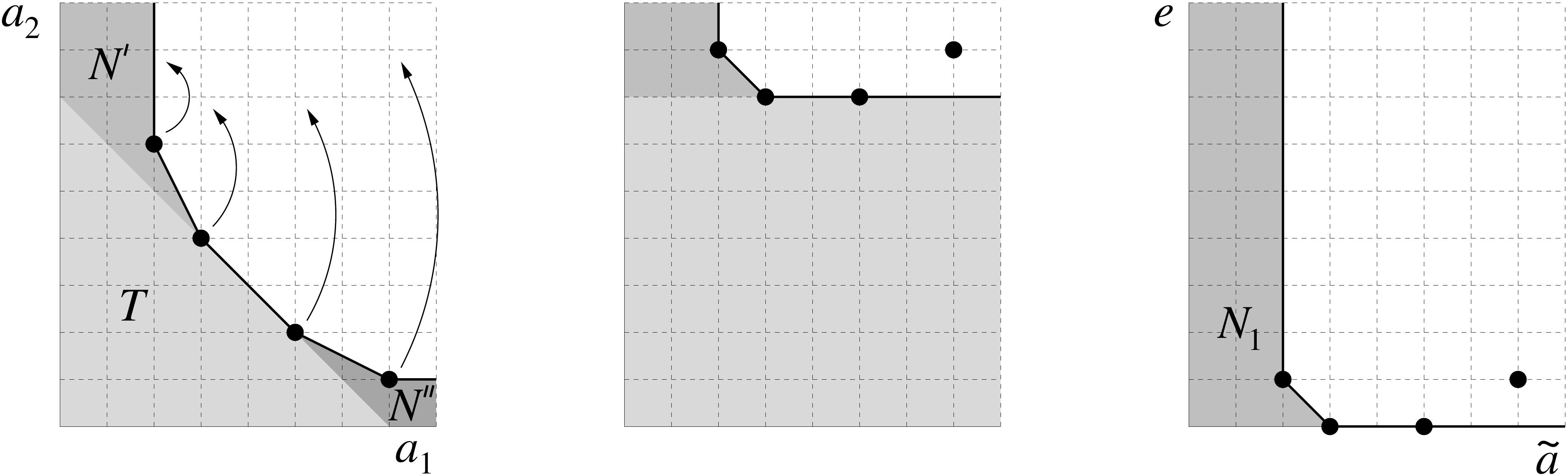}
\end{center}
By induction on the number of blow-ups needed for a principalization, Conjecture~\ref{conj} 
holds for $S_1$. Thus
\begin{equation}\label{indu}
\iota_{1*} s(S_1,\Til V)
=\int_{N_1} \frac{2 \Til X_1E \, d\tilde a\, de}{(1+\tilde a \Til X_1+eE)^3}
\end{equation}
where $\iota_1: S_1\hookrightarrow \Til V$ is the embedding, and $N_1$ is the Newton
region for $S_1$. Note that $N_1$ maps onto region $N'$ via the transformation
$(\tilde a,e)\mapsto (a_1,a_2)=(\tilde a,e-\tilde a+m)$.

\begin{claim}
\[
\iota_{1*}\left(\frac 1{1+mE}\cap (s(S_1,\Til V)\otimes \cO(mE))\right)
=\int_{N_1} \frac{2 \Til X_1E\, d\tilde a\, de}{(1+\tilde a \Til X_1+ (e+m)E)^3} 
\]
\end{claim}

\begin{proof}
Applying \eqref{indu}, the projection formula, and the fact that the line bundle $\cO(mE)$ 
is constant with respect to the integration variables $\tilde a$ and $e$, we see that the 
left-hand side equals
\begin{align*}
\frac 1{1+mE} \cap &
\left(\left(\int_{N_1} \frac{2 \Til X_1E \, d\tilde a\, de}{(1+\tilde a \Til X_1+eE)^3}\right)
\otimes \cO(mE)\right) \\
&=\int_{N_1}\frac 1{1+mE}  \left(
\frac{2 \Til X_1E}{(1+\tilde a \Til X_1+eE)^3} 
\otimes \cO(mE)\right)\, d\tilde a\, de \\
&=\int_{N_1}\frac 1{1+mE}  \left(
\frac{(1+mE)\, 2 \Til X_1E}{(1+\tilde a \Til X_1+(e+m)E)^3} 
\right)\, d\tilde a\, de \\
&=\int_{N_1}
\frac{2 \Til X_1E\, d\tilde a\, de}{(1+\tilde a \Til X_1+(e+m)E)^3}  
\end{align*}
as stated. (We have used here the formal properties of the $\otimes$ operation,
in particular Proposition~1 of \cite{MR96d:14004}.)
\end{proof}

Thus, we are reduced to proving

\begin{claim}
\begin{equation}\label{subst}
\pi_* \int_{N_1} \frac{2 \Til X_1E\, d\tilde a\, de}{(1+\tilde a \Til X_1+ (e+m)E)^3} 
=\int_{N'} \frac{2\, X_1 X_2\, da_1 da_2}{(1+a_1 X_1 + a_2 X_2)^3}\quad.
\end{equation}
\end{claim}

\begin{proof}
As observed above, $N_1$ maps onto $N'$ via $(a_1,a_2)\mapsto (\tilde a,e)=
(a_1, a_1+a_2-m)$. This transformation has Jacobian 1, therefore \eqref{subst}
follows from the equality
\begin{equation}\label{pf}
\pi_*\left(\frac{\Til X_1\cdot E}{(1+\tilde a \Til X_1+ (e+m)E)^3}\right)
=\frac{X_1\cdot X_2}{(1+a_1 X_1 + a_2 X_2)^3}\quad.
\end{equation}
In turn, \eqref{pf} follows from the projection formula. Indeed, 
\begin{align*}
\pi^*(a_1 X_1+a_2 X_2) &= a_1 (\Til X_1+E) + a_2(\Til X_2+E)
=\tilde a(\Til X_1+E) + (e-\tilde a+m)(\Til X_2+E)
\\ &=\tilde a \Til X_1 + (e+m) E +(e-\tilde a+m)\Til X_2\quad,
\end{align*}
so that $\dfrac{\Til X_1\cdot E}{(1+\tilde a\Til X_1+(e+m)E)^3}$ equals
\[
\frac{\Til X_1\cdot E}{(\pi^*(1+a_1 X_1+a_2 X_2)-(e-\tilde a+m)\Til X_2)^3}
=\frac{\Til X_1\cdot E}{(\pi^*(1+a_1 X_1+a_2 X_2))^3}
\]
as $\Til X_1\cdot \Til X_2=0$. Hence
\[
\pi_*\left(\frac{\Til X_1\cdot E}{(1+\tilde a \Til X_1+ (e+m)E)^3}\right)
=\pi_*\left(\frac{\Til X_1\cdot E}{(\pi^*(1+a_1 X_1+a_2 X_2))^3}\right)
=\frac{\pi_*(\Til X_1\cdot E)}{(1+a_1 X_1 + a_2 X_2)^3}\quad,
\]
and $\pi_* (\Til X_1\cdot E)=X_1\cdot X_2$, concluding the proof.
\end{proof}

---(iii) is handled in exactly the same way.
\end{proof}

\begin{remark}\label{intdep}
As a consequence of Theorem~\ref{mainthm}, monomial generators which do
not affect the Newton region (i.e., which are in the convex hull of the translated
quadrants determined by the other generators) do not affect the Segre class.

This fact is not surprising, and holds for arbitrary $n$. Indeed, such generators
do not affect the integral closure of the ideal, and the Segre class only depends on
the integral closure, cf.~the proof of Lemma~1.2 in~\cite{MR97b:14057}.
\qede\end{remark}


\section{Examples for arbitrary $n$}\label{examples}

\subsection{Calculus}\label{calculus}
The following observation simplifies the computation of the integral in Conjecture~\ref{conj}.
We work in $\Rbb^n$, with coordinates $(a_1,\dots,a_n)$.
Let $\ue_1=(1,0,\dots,0),\dots, \ue_n=(0,\dots,0,1)$. We also consider indeterminates 
$X_i$, and for $\ua=(a_1,\dots,a_n)\in \Rbb^n$ we write $a_1 X_1+\cdots
+a_n X_n= \ua\cdot \uX$. The integral in \eqref{eq:main} is
\[
\int_N \frac{n!\, X_1\cdots X_n\, da_1\cdots da_n}{(1+\ua\cdot \uX)^{n+1}}\quad.
\]

Let $T$ be a $k$-dimensional simplex in $\Rbb^n$, with vertices $\uv_0,\dots, \uv_k$. 
For $J=\{j_1,\dots,j_k\}$, with $1\le j_i<\cdots< j_k\le n$, let $\pi_J: \Rbb^n \to \Rbb^k$
be the projection to the span of $\ue_{j_1},\dots,\ue_{j_k}$. We denote by $T^J$ the 
region $\{\ua+\sum_{i\not\in J} \lambda_i \ue_i\,|\, \ua\in T, \lambda_i\ge 0\}$.

\begin{center}
\includegraphics[scale=.5]{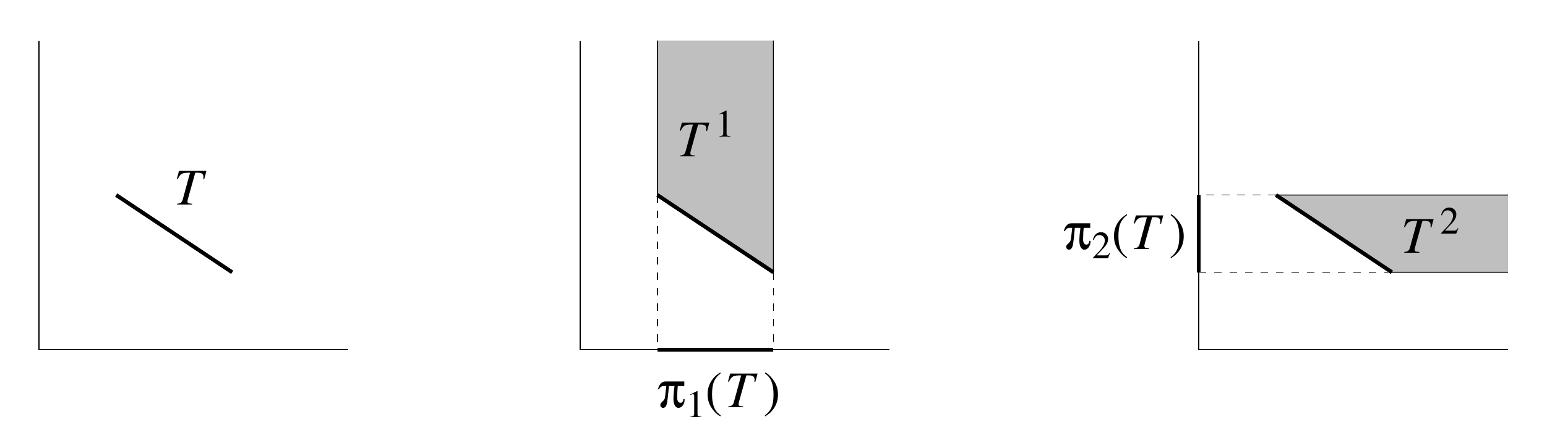}
\end{center}

\begin{prop}\label{calc}
With notation as above,
\[
\int_{T^J} \frac{n!\, X_1\cdots X_n\, da_1\cdots da_n}{(1+\ua\cdot \uX)^{n+1}}
=\frac{k!\Vol(\pi_J(T))\,X_{j_1}\cdots X_{j_k}}{(1+\uv_0\cdot \uX)\cdots (1+\uv_k\cdot \uX)}\quad.
\]
\end{prop}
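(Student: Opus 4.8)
The plan is to evaluate the integral by Fubini's theorem: integrate first over the ``cylinder'' directions $a_i$, $i\notin J$ --- a computation of exactly the type already carried out in \S\ref{princip} --- and then over the base simplex $\pi_J(T)$, where the remaining integral is a classical one that I would evaluate in barycentric coordinates by an induction. I first set aside a degenerate case: if $\pi_J$ fails to be injective on the affine span of $T$, then $\pi_J(T)$ has dimension $<k$, so $\Vol(\pi_J(T))=0$, while $T^J$ then has dimension $<n$ and hence empty interior; so both sides vanish and there is nothing to prove. Assume then that $\pi_J$ restricts to an affine isomorphism of $T$ onto the $k$-simplex $\Delta:=\pi_J(T)\subset\Rbb^k$, with vertices $\pi_J(\uv_0),\dots,\pi_J(\uv_k)$. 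Splitting $\Rbb^n=\Rbb^J\oplus\Rbb^{J^c}$ and writing $\ua=(\ua_J,\ua_{J^c})$ accordingly, $T$ is the graph $\{(\ua_J,\sigma(\ua_J)):\ua_J\in\Delta\}$ of a uniquely determined affine map $\sigma\colon\Delta\to\Rbb^{J^c}$, and, by its definition, $T^J=\{(\ua_J,\ua_{J^c}):\ua_J\in\Delta,\ a_i\ge\sigma_i(\ua_J)\text{ for all }i\notin J\}$.

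Integrating over the fibers of $T^J\to\Delta$ first, I would apply the one-variable identity $\int_\ell^\infty(c+aX)^{-(m+1)}\,da=\tfrac1{mX}(c+\ell X)^{-m}$ to the $n-k$ variables $a_i$ ($i\notin J$) one at a time; this drops the exponent from $n+1$ to $k+1$ and, combined with the coefficient $n!\,X_1\cdots X_n$, leaves the coefficient $k!\prod_{j\in J}X_j$, so that for each fixed $\ua_J\in\Delta$
\[
\int_{\{a_i\ge\sigma_i(\ua_J)\,:\,i\notin J\}}\frac{n!\,X_1\cdots X_n\,\prod_{i\notin J}da_i}{(1+\ua\cdot\uX)^{n+1}}=\frac{k!\,\prod_{j\in J}X_j}{L(\ua_J)^{k+1}}\quad,
\]
where $L(\ua_J):=1+\bigl(\ua_J,\sigma(\ua_J)\bigr)\cdot\uX$ is the restriction to $T$ of the affine function $1+\ua\cdot\uX$; note $L$ is affine on $\Delta$ with $L(\pi_J(\uv_l))=1+\uv_l\cdot\uX$. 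The outer integral thus becomes $k!\,\bigl(\prod_{j\in J}X_j\bigr)\int_\Delta L(\ua_J)^{-(k+1)}\,d\ua_J$, and the proposition is reduced to the real-variable identity
\[
\int_\Delta\frac{d\ua_J}{L(\ua_J)^{k+1}}=\frac{\Vol(\Delta)}{L(\pi_J(\uv_0))\cdots L(\pi_J(\uv_k))}\quad,
\]
valid for any $k$-simplex $\Delta\subset\Rbb^k$ and any affine $L$ positive on $\Delta$; to apply it here one takes the $X_i$ to be small positive reals, which is legitimate since both sides are rational in the $X_i$. (This is exactly \eqref{simplex} with the $X_i$ suppressed, that is, the case $J=\{1,\dots,n\}$, $k=n$, of the proposition.)

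To prove this last identity I would pass to barycentric coordinates $\ua_J=\sum_{l=0}^k t_l\,\pi_J(\uv_l)$: then $d\ua_J=k!\,\Vol(\Delta)\,dt_1\cdots dt_k$ over the standard simplex $\{t_i\ge0,\ t_1+\cdots+t_k\le1\}$ (with $t_0:=1-t_1-\cdots-t_k$) and $L(\ua_J)=\sum_{l=0}^k t_lL_l$ with $L_l:=L(\pi_J(\uv_l))$, so that the assertion becomes
\[
\int_{\{t_i\ge0,\ \sum t_i\le1\}}\frac{dt_1\cdots dt_k}{(t_0L_0+t_1L_1+\cdots+t_kL_k)^{k+1}}=\frac1{k!\,L_0L_1\cdots L_k}\quad.
\]
I would prove this by induction on $k$. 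The case $k=1$ is the elementary $\int_0^1\bigl((1-t)L_0+tL_1\bigr)^{-2}\,dt=\tfrac1{L_0L_1}$. For the inductive step (assuming first $L_k\ne L_0$, the general case then following by continuity), integrating $t_k$ out over $[0,\,1-t_1-\cdots-t_{k-1}]$ rewrites the left-hand side as $\tfrac1{k(L_k-L_0)}$ times the difference of two $(k-1)$-dimensional integrals of the very same shape, one with vertex values $L_0,L_1,\dots,L_{k-1}$ and one with $L_k,L_1,\dots,L_{k-1}$; applying the inductive hypothesis to each and simplifying --- the telescoping factor $\tfrac1{L_0}-\tfrac1{L_k}=\tfrac{L_k-L_0}{L_0L_k}$ cancelling the $\tfrac1{k(L_k-L_0)}$, and $\tfrac1{(k-1)!}\cdot\tfrac1k=\tfrac1{k!}$ --- yields $\tfrac1{k!\,L_0L_1\cdots L_k}$, as required.

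The computation involves no serious obstacle; the care needed is entirely bookkeeping --- keeping track of the codimension $n-k$ and of the affine function $L$ through the iterated integration in the second step, and, in the inductive step of the last identity, correctly recognizing that after integrating out $t_k$ the two residual integrands are again of simplex type over $k$ vertices (with one original vertex replaced), so that the induction genuinely closes. Beyond this, only elementary calculus and the computations of \S\ref{princip} are used.
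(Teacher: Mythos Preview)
Your proposal is correct and follows essentially the same route as the paper's proof: integrate out the unbounded directions first to drop the exponent from $n+1$ to $k+1$, then parametrize the base simplex by the standard simplex and reduce to the elementary identity $\int_\Sigma k!\,dt/(M+L_1t_1+\cdots+L_kt_k)^{k+1}=1/\bigl(M(M+L_1)\cdots(M+L_k)\bigr)$, proved by induction. The only differences are cosmetic---you use full barycentric coordinates where the paper fixes $\uv_0$ as origin---and that you actually write out the inductive step, which the paper leaves as ``immediately verified.''
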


\begin{proof}
For simplicity of notation, assume $J=\{1,\dots,k\}$.
If $\dim \pi_J(T)<k=\dim T$, then the integral is $0$. Thus, we may assume that $\pi_J(T)$ is
a $k$-simplex, with vertices $\uv_i'=\pi_J(\uv_i)$. Parametrize $\pi_J(T)$ by the simplex $\Sigma$ 
defined by 
\[
\{(t_1,\dots, t_k)\,|\, t_i\ge 0, t_1+\cdots +t_k\le 1\}
\]
in $\Rbb_{\ge 0}^k$, via
\[
(t_1,\dots, t_k) \mapsto \uv'(\ut):=\uv'_0+t_1(\uv'_1-\uv'_0)+\cdots +t_k (\uv'_k-\uv'_0)\quad.
\]
The Jacobian of this parametrization is $\Vol(\pi_J(T))$. There are linear functions
$\ell_j(\ut)$ such that
\[
(t_1,\dots, t_k) \mapsto \uv(\ut):=\uv'(\ut)+\sum_{j=k+1}^n \ell_j(\ut)\, \ue_j
\]
parametrizes $T$. With this notation,
\[
\uv(0,\dots,0)=\uv_0,\quad
\uv(1,0,\dots,0)=\uv_1,\quad
\cdots,\quad
\uv(0,\dots,0,1)=\uv_k,
\]
and the integral to be evaluated is
\[
\int_\Sigma \int_{\ell_{k+1}(\ut)}^{\infty} \cdots \int_{\ell_n(\ut)}^{\infty}
\frac{\Vol(\pi_J(T))\, X_1\cdots X_n\, n!\,da_n\cdots da_{k+1}\, dt_k\cdots dt_1}
{(1+\uv'(\ut)\cdot (X_1,\dots,X_k)+a_{k+1} X_{k+1}+\cdots+a_n X_n)^{n+1}} 
\quad.
\]
Performing the unbound integrations shows that this equals
\begin{multline*}
\int_\Sigma\frac{\Vol(\pi_J(T))\,X_1\cdots X_k \,k!\, dt_k\cdots dt_1}
{(1 + \uv'(\ut)\cdot (X_1,\dots,X_k)+\ell_{k+1}(\ut) X_{k+1}+\cdots +\ell_n(\ut) X_n)^{n+1}}\\
=\Vol(\pi_J(T))\,X_1\cdots X_k \,\int_\Sigma\frac{k!\, dt_k\cdots dt_1}
{(1 + \uv(\ut)\cdot \uX)^{k+1}}
\end{multline*}
Thus, the statement of Proposition~\ref{calc} is reduced to the following fact:
\[
\int_\Sigma \frac{k!\, dt_k\cdots dt_1}{(M+L_1 t_1+\cdots +L_kt_k)^{k+1}}
=\frac 1{M(M+L_1)\cdots (M+L_k)}\quad,
\]
with $M, L_1,\dots, L_k$ independent of $\ut$. This equality is immediately verified 
by induction.
\end{proof}

\begin{example}\label{exas}
---The integral over the unbounded region in Example~\ref{introex3} is evaluated by
Proposition~\ref{calc} with $T=$ the segment $(0,0) \to (2,6)$ and $J=\{1\}$:
\begin{center}
\includegraphics[scale=.3]{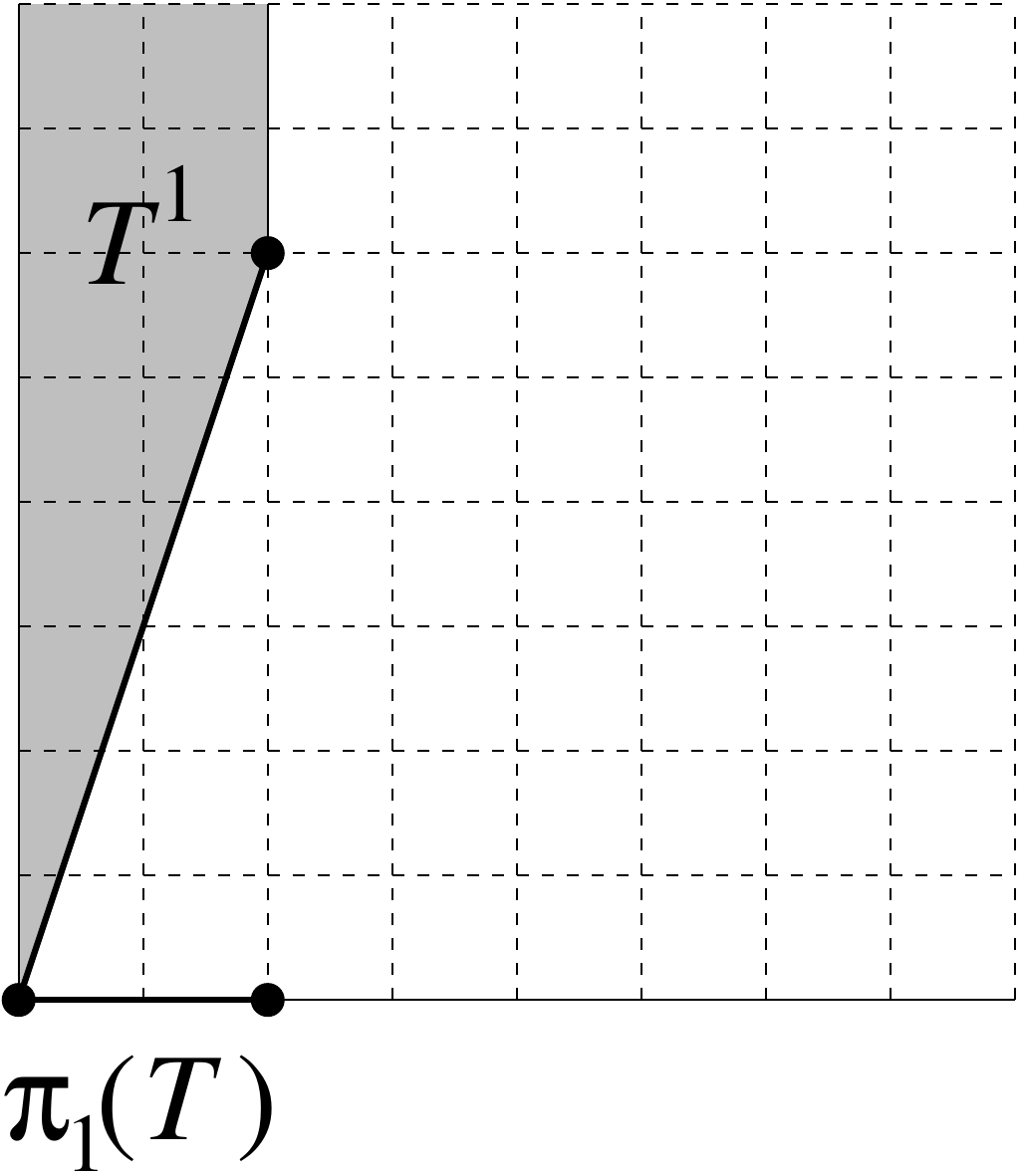}
\end{center}
\[
\int_{T^1} \frac{2!\, X_1 X_2\, da_1 da_2}{(1+\ua\cdot \uX)^3}
=\frac{1!\,2\,X_1}{(1+0X_1+0X_2)(1+2X_1+6X_2)}\quad.
\]
With $X_1=X_2=H$, this gives the term $\dfrac{2H}{1+8H}$ used in Example~\ref{introex3}.
\smallskip

---The integral over the triangle $T$ with vertices $(0,0)$, $(0,m)$, $(m,0)$ is evaluated
by taking $J=\{1,2\}$, giving the expression used in the proof of Claim~\ref{keyclaim} (i).
\smallskip

---For a more interesting example, consider the `infinite column' $T^{12}$ determined by 
the triangle in $\Rbb^3$ with vertices $(0,0,0)$, $(1,0,1)$, $(0,1,1)$ by taking $J=\{1,2\}$:
\begin{center}
\includegraphics[scale=.5]{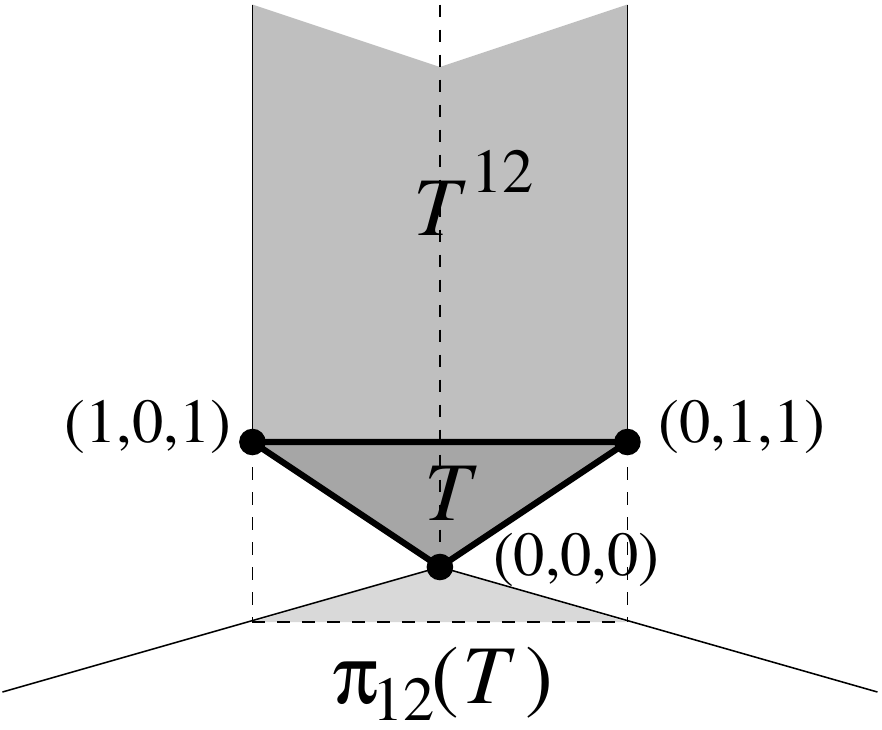}
\end{center}

The area of $\pi_{12}(T)$ is $\frac 12$, so Proposition~\ref{calc} evaluates the integral over 
$T^{12}$ as
\[
\frac{X_1\cdot X_2}{(1+X_1+X_3)(1+X_2+X_3)}\quad.
\]
This will be used below in Example~\ref{threelines}.
\qede\end{example}

\subsection{Complete intersections}\label{ci}
As we are assuming that $X_1,\dots,X_n$ are nonsingular hypersurfaces meeting
with normal crossings, the monomial subscheme corresponding to exponents
\[
(m_1,0,\dots,0),\quad
(0,m_2,0,\dots,0),\quad
\cdots,\quad
(0,\dots,0,m_n)
\]
is a complete intersection, with normal bundle $\cO(m_1 X_1)\oplus\cdots\oplus \cO(m_n
X_n)$. Therefore
\[
\iota_* s(S,V) = \frac{[S]}{\prod_i c(\cO(m_i X_i)} = \prod_i \frac{m_i X_i}{(1+m_i X_i)}\quad.
\]
The corresponding Newton region is a simplex with vertices on the coordinate axes:
\begin{center}
\includegraphics[scale=.5]{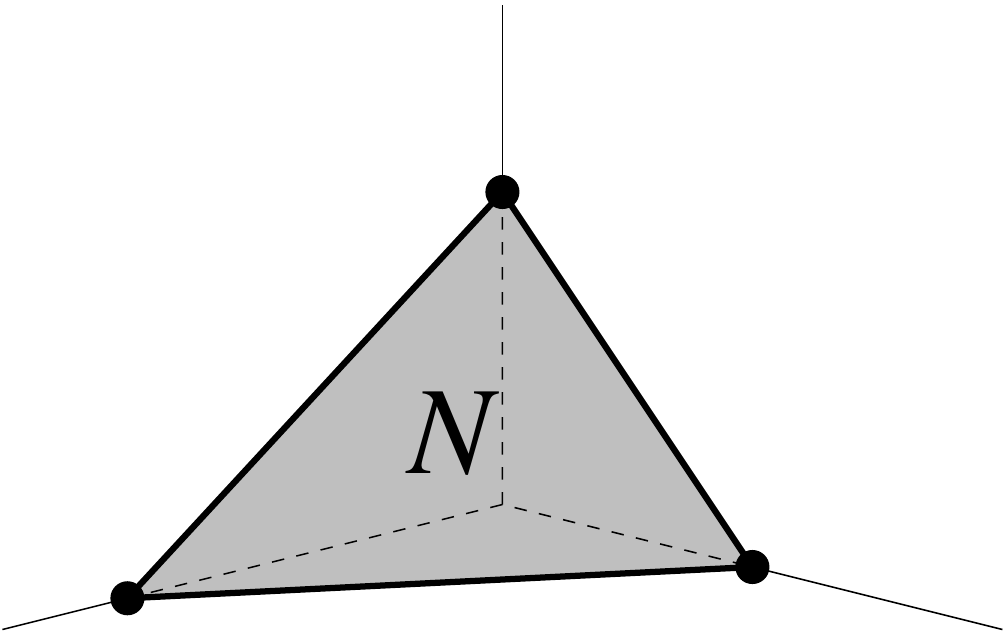}
\end{center}
and according to Proposition~\ref{calc}
\begin{align*}
\int_N \frac{n! X_1\cdots X_n\, da_1\cdots da_n}{(1+a_1 X_1+\cdots+a_n X_n)^{n+1}}
&=\frac{n! \Vol(N)\, X_1\cdots X_n}{(1+m_1X_1)\cdots (1+m_n X_n)} \\
&=\frac{m_1\cdots m_n\, X_1\cdots X_n}{(1+m_1X_1)\cdots (1+m_n X_n)}\quad.
\end{align*}
This verifies Conjecture~\ref{conj} for these complete intersection.\smallskip

A somewhat harder calculus exercise verifies Conjecture~\ref{conj} for arbitrary 
complete intersections of monomials. As an example of what is involved in this
verification, consider a monomial subscheme $S'\overset{\iota'} \hookrightarrow V$ 
supported on $X_1,\dots,X_{n-1}$, and let $S$ be the intersection of~$S'$ with the 
$m$-multiple $m X_n$. Standard facts about Segre classes imply that
\begin{equation}\label{eq:ci}
\iota'_* s(S',V) = \frac{mX_n}{1+mX_n}\cap \iota_* s(S,V)\quad.
\end{equation}
To see that Conjecture~\ref{conj} is compatible with this formula, observe that the Newton
region $N$ for $S$ is a cone over the Newton region $N'$ for $S'$ with vertex at
$(0,\cdots,0,m)$.
\begin{center}
\includegraphics[scale=.5]{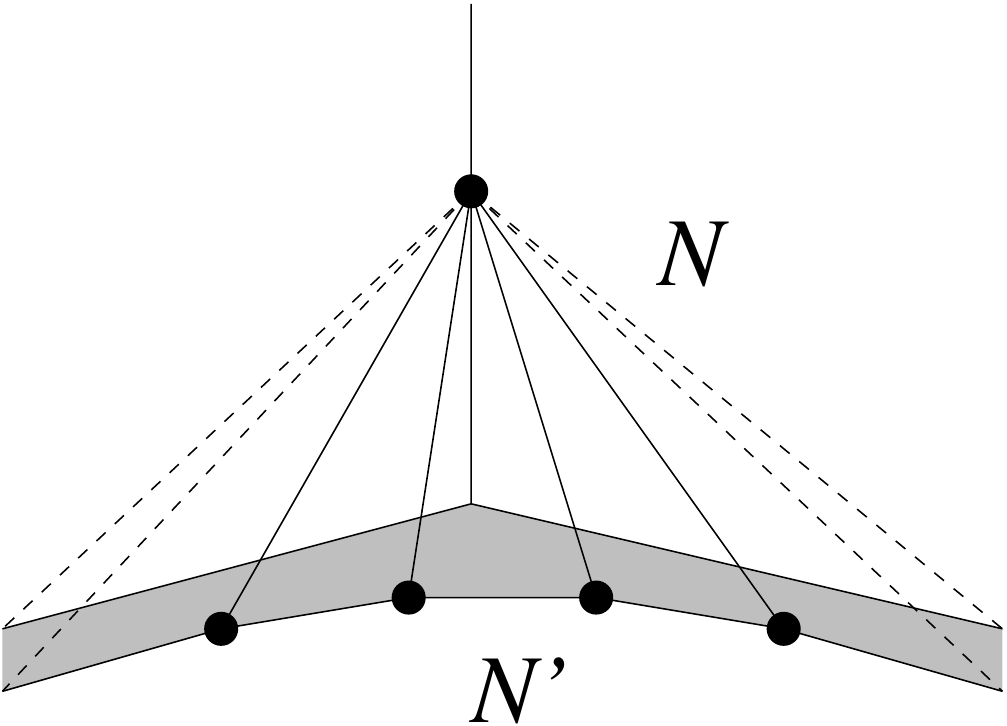}
\end{center}
The equality \eqref{eq:ci} amounts to
\[
\int_N\frac{n!\, X_1\cdots X_n\, da_1\cdots da_n}{(1+a_1 X_1+\cdots+a_n X_n)^{n+1}}
=\frac{mX_n}{1+mX_n}\cap \int_{N'} 
\frac{(n-1)!\, X_1\cdots X_{n-1}\, da_1\cdots da_{n-1}}{(1+a_1 X_1+\cdots+a_{n-1} X_{n-1})^n}
\]
and in the cone situation this follows from
\[
\int_0^1 \frac{n!\,t^{n-1} dt}{(A+Bt)^{n+1}} = \frac{(n-1)!}{A(A+B)^n}
\]
with suitable positions for $A$ and $B$, as the reader may check. A similar (but harder) computation
verifies the corresponding formula whenever $S'$ is a monomial scheme in $X_1,\dots, X_k$ 
and the lone vertex is a single monomial in $X_{k+1},\dots, X_n$. Repeated application of this 
more general formula implies that \eqref{eq:main} holds for any complete intersection of 
monomials.

\subsection{Singularity subschemes}
As a less straightforward family of examples verifying Conjecture~\ref{conj}, we consider
the monomial subschemes on $X_1,\dots,X_n$ with exponents
\begin{equation}\label{eq:expos}
\uf_1:=(0,1,\dots, 1),\quad \uf_2:=(1,0,1,\dots,1),\quad \cdots,\quad \uf_n:=(1,\dots,1,0)\quad.
\end{equation}
These subschemes are very far from being complete intersections (for $n>1$), and 
computing their Segre class requires some nontrivial work, which depends on features 
of these schemes which do not hold for arbitrary monomial schemes. {\em Ad-hoc\/} 
alternative methods are occasionally available, as in the following example.

\begin{example}\label{threelines}
For $n=3$ in $\Pbb^3$, with coordinates $x,y,z,w$, the exponents $\uf_1,\uf_2,\uf_3$
define the monomial subscheme $S$ with ideal
\[
(xy, xz,yz)\quad.
\]
This scheme is reduced and consists of three coordinate axes, so its Segre class must
be $\iota_* s(S,\Pbb^3)=3[\Pbb^1]+a [\Pbb^0]$ for some integer $a$. On the other hand,
$S$ is the scheme-theoretic intersection of three quadrics $Q_1$, $Q_2$, $Q_3$ (each 
consisting of two coordinate planes). By Fulton-MacPherson intersection theory and 
B\'ezout's theorem, and denoting by $H$ the hyperplane class,
\[
8 = \int Q_1\cdot Q_2 \cdot Q_3 = \int c(\cO(2))\cap s(S,\Pbb^3)
=\int (1+2H)^3\cap (3[\Pbb^1]+a [\Pbb^0])
=18+a\quad.
\]
It follows that $a=-10$, so $\iota_* s(S,\Pbb^3)=3[\Pbb^1]-10 [\Pbb^0]$.

The Newton region for $S$ may be decomposed into $3$ infinite columns  and
one tetrahedron with vertices $(0,0,0),(1,1,0),(1,0,1),(0,1,1)$.
\begin{center}
\includegraphics[scale=.5]{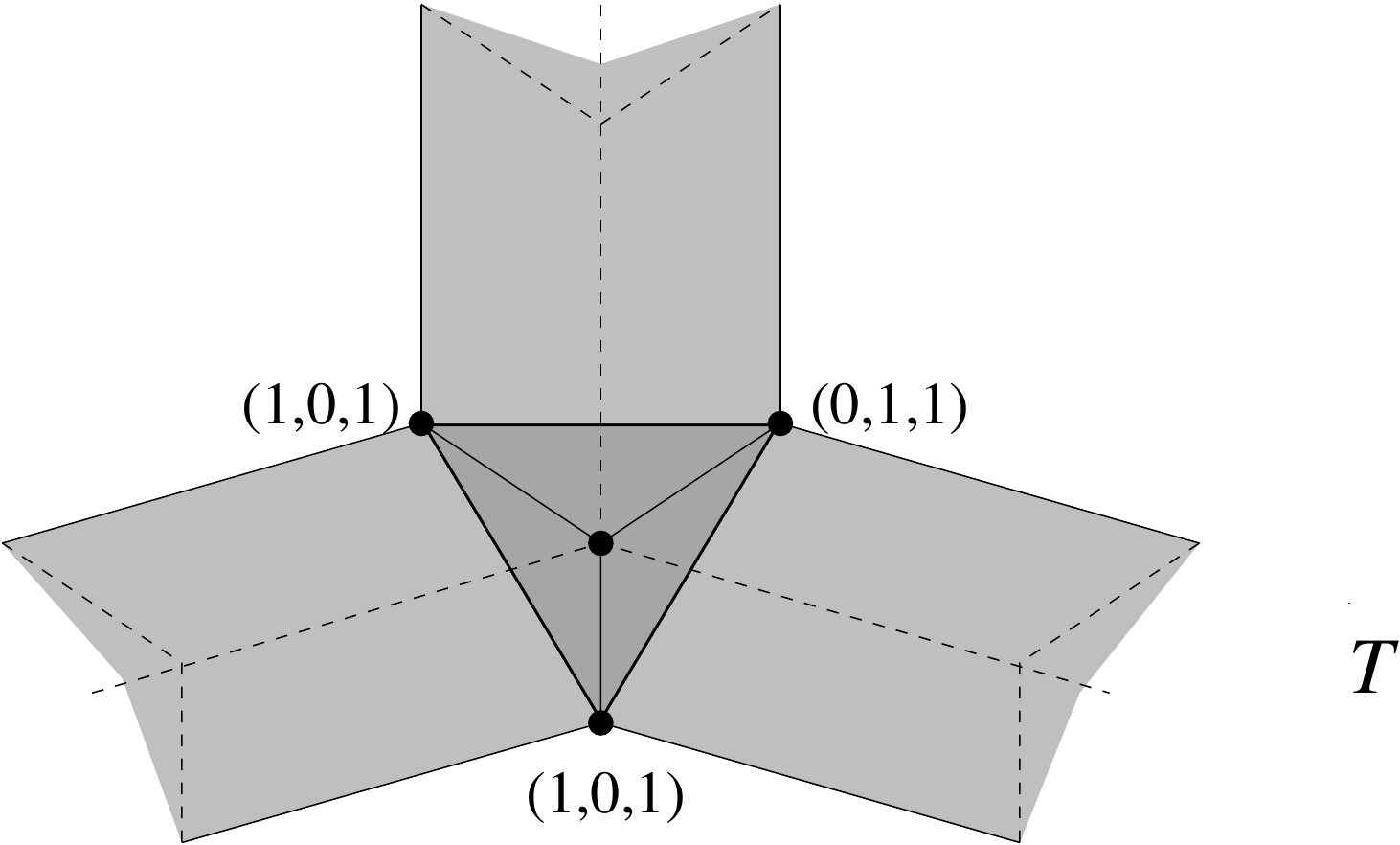}
\end{center}
The contribution of a column to the integral in \eqref{eq:main} was computed in
Example~\ref{exas}, and equals (setting $X_1=X_2=H$, the hyperplane class)
\[
\frac{H^2}{(1+2H)^2}\quad.
\]
The tetrahedron has volume $\frac 13$, hence it contributes (using 
Proposition~\ref{calc}, and again setting $X_1=X_2=X_3=H$)
\[
\frac{3!\, \dfrac 13\, X_1 X_2 X_3}{(1+X_1+X_2)(1+X_1+X_3)(1+X_2+X_3)}
=\frac{2H^3}{(1+2H)^3}\quad.
\]
Therefore, according to Conjecture~\ref{conj}, the Segre class equals
\[
\int_N \frac{n!\,X_1 X_2 X_3\,da_1 da_2 da_3}{(1+a_1 X_1+a_2 X_2+a_3 X_3)^4}
=\frac{2H^3}{(1+2H)^3}+3 \frac{H^2}{(1+2H)^2}=\frac{3H^2+8H^3}{(1+2H)^3}
=3H^2-10 H^3
\]
in agreement with the direct computation.
\qede\end{example}

For any $n$, the subscheme $S$ defined by the exponents~\eqref{eq:expos} is the
{\em singularity subscheme\/} of the union $X$ of the hypersurfaces $X_1,\dots, X_n$,
i.e., the subscheme of $X$ locally defined by the partials of an equation for $X$ in $V$.
(For example, $(xy,xz,yz)$ is the ideal generated by the partials of $xyz$.)
This is what gives us independent access to the Segre classes for these subschemes, 
and allows to verify Conjecture~\ref{conj} in these cases.

\begin{prop}\label{nontri}
For all $n>0$, Conjecture~\ref{conj} holds for the monomial subschemes defined 
by the exponents $\uf_1,\dots,\uf_n$ listed in \eqref{eq:expos}.
\end{prop}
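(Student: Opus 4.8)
The plan is to evaluate the two sides of~\eqref{eq:main} independently and to match them.

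\emph{Left-hand side.} The scheme $S$ cut out by the exponents $\uf_1,\dots,\uf_n$ is the singularity subscheme of the normal crossing divisor $X=X_1+\dots+X_n$: the partial derivatives of a local equation $x_1\cdots x_n$ for $X$ are precisely the monomials $X^{\uf_i}$. I would first compute the Chern--Schwartz--MacPherson class of $X$ by inclusion--exclusion on constructible functions: from $c_{SM}(X)=\sum_{\emptyset\ne K}(-1)^{|K|+1}c_{SM}(X_K)$, where $X_K=\bigcap_{k\in K}X_k$ is nonsingular and $c_{SM}(X_K)=c(TX_K)\cap[X_K]$, pushing forward to $V$ one gets
\[
c_{SM}(X)=c(TV)\cap\Bigl(1-\prod_{k=1}^n\frac1{1+X_k}\Bigr)\cap[V]\quad.
\]
Next I would invoke the formula of~\cite{MR2001i:14009} relating $c_{SM}(X)$ to $c(TV)$, $\cO(X)$ and $s(S,V)$ (through the operations ${}^\vee$ and $\otimes$) and solve for $\iota_*s(S,V)$. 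Since these operations are invertible and $c_1(\cO(X))=X_1+\dots+X_n=:\sigma$, a short computation yields the closed form
\[
\iota_*s(S,V)=\Bigl(1-\frac{(1+\sigma)^{n-1}}{\prod_{j=1}^n(1+\sigma-X_j)}\Bigr)\cap[V]\quad,
\]
which one checks is consistent with the cases $n\le 2$ and with Example~\ref{threelines}.

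\emph{Right-hand side.} I would compute the integral over the Newton region $N$ for $\uf_1,\dots,\uf_n$ by an explicit decomposition adapted to Proposition~\ref{calc}. For each subset $K\subseteq\{1,\dots,n\}$ with $|K|\ge2$ let $T_K$ be the simplex on the vertices $\underline0$ and $\{\uf_j:j\in K\}$; it has dimension $|K|$, and its projection to the coordinates indexed by $K$ satisfies $|K|!\,\Vol\bigl(\pi_K(T_K)\bigr)=|K|-1$ (the determinant of the all-ones-minus-identity matrix of size $|K|$). Let $R_K$ be the region denoted $T^J$ in \S\ref{calculus} for $T=T_K$ and $J=K$, so that $R_K$ is obtained from $T_K$ by translating freely in the directions $\ue_i$, $i\notin K$. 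One verifies that the regions $R_K$ cover $N$ and overlap only in sets of measure zero; then, since $\uf_j\cdot\uX=\sigma-X_j$, Proposition~\ref{calc} gives
\[
\int_{R_K}\frac{n!\,X_1\cdots X_n\,da_1\cdots da_n}{(1+a_1X_1+\dots+a_nX_n)^{n+1}}
=\frac{(|K|-1)\prod_{j\in K}X_j}{\prod_{j\in K}(1+\sigma-X_j)}\quad,
\]
so the right-hand side of~\eqref{eq:main} equals $\sum_{|K|\ge2}(|K|-1)\prod_{j\in K}X_j\big/\prod_{j\in K}(1+\sigma-X_j)$.

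\emph{Matching.} Putting this sum over the common denominator $\prod_{j=1}^n(1+\sigma-X_j)$, the two sides agree if and only if
\[
\sum_{|K|\ge2}(|K|-1)\prod_{j\in K}X_j\prod_{j\notin K}(1+\sigma-X_j)=\prod_{j=1}^n(1+\sigma-X_j)-(1+\sigma)^{n-1}\quad.
\]
Writing $e_m$ for the elementary symmetric functions of $X_1,\dots,X_n$, the right side equals $\sum_{m=2}^n(-1)^m e_m(1+\sigma)^{n-m}$ by the identity $\prod_j(w-X_j)=\sum_{m=0}^n(-1)^m e_m w^{n-m}$ at $w=1+\sigma$, after the $m=0,1$ terms cancel. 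Expanding each factor $\prod_{j\notin K}(1+\sigma-X_j)$ and regrouping by the degree of the resulting monomial, the left side becomes $\sum_{m\ge2}e_m(1+\sigma)^{n-m}\bigl(\sum_{\ell=2}^m(-1)^{m-\ell}\binom m\ell(\ell-1)\bigr)$, so the whole statement reduces to the elementary identity $\sum_{\ell=2}^m(-1)^{m-\ell}\binom m\ell(\ell-1)=(-1)^m$ for $m\ge2$, which follows at once from $\sum_\ell(-1)^\ell\binom m\ell=0$ and $\sum_\ell\ell(-1)^\ell\binom m\ell=0$.

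I expect the main obstacle to be the combinatorial geometry behind the decomposition $N=\bigcup_{|K|\ge2}R_K$, that is, checking that every point of $N$ lies in some $R_K$ and that distinct $R_K$ meet only in measure zero. A point $\ua\in N$ violates some inequality $\sum_{k\in K}a_k<|K|-1$ defining the Newton polyhedron; one must show that $K$ may be chosen so that $a_k\le\tau_K$ for $k\in K$ and $a_k\ge\tau_K$ for $k\notin K$, where $\tau_K:=\sum_{k\in K}a_k/(|K|-1)$, whereupon $\ua=\sum_{j\in K}(\tau_K-a_j)\uf_j+\sum_{i\notin K}(a_i-\tau_K)\ue_i$ exhibits $\ua\in R_K$ with nonnegative coefficients. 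The minimality argument forcing these coefficients to be nonnegative, and the essential uniqueness of such $K$, are the delicate points; the remaining ingredients—the inversion of the formula of~\cite{MR2001i:14009} and the binomial identity above—are routine.
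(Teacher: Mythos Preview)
Your proposal follows the same strategy as the paper: compute $\iota_*s(S,V)$ via the formula of~\cite{MR2001i:14009} for the singularity subscheme of the normal-crossing divisor $X_1+\cdots+X_n$, decompose the Newton region $N$ as the union of the regions $T_K^K$ (the paper's $\Sigma_J^J$) for $|K|\ge 2$, apply Proposition~\ref{calc}, and then verify the resulting rational-function identity. The one genuine difference is in the verification of that identity: the paper proves~\eqref{eq:unlike} by interpreting the left-hand side as the value at $t=1$ of a derivative of a product and collapsing it directly, whereas you clear denominators, expand in elementary symmetric functions, and reduce to the binomial identity $\sum_{\ell=2}^m(-1)^{m-\ell}\binom{m}{\ell}(\ell-1)=(-1)^m$; both arguments are short and correct, and your reduction has the virtue of isolating exactly which combinatorial fact is doing the work. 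Your caution about the decomposition $N=\bigcup_{|K|\ge2}R_K$ is appropriate---the paper simply asserts it and leaves the verification to the reader---so you are not missing anything the paper supplies.
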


\begin{proof}
Let $F=\{\uf_1,\dots,\uf_n\}$ be the set of exponents. The Newton region $N$ may be
described as follows. For any $J\subseteq \{1,\dots,n\}$, let $\Sigma_J$ 
be the simplex with vertices at the origin and at the points $\uf_j$ with $j\in J$, and 
consider the subsets $\Sigma_J^J$, with notation as in~\S\ref{calculus}.
The reader can verify that the Newton region $N$ is then the union of the sets 
$\Sigma_J^J$ with $|J|\ge 2$.
(For example, the region $N$ for $n=3$ decomposes as the union of the three 
columns $\Sigma_{12}^{12}$, $\Sigma_{13}^{13}$, $\Sigma_{23}^{23}$, and the 
$3$-simplex $\Sigma_{123}^{123}$. Cf.~Example~\ref{threelines}.)

The volume of the projection $\pi_J(\Sigma_J)$ is easily found to be $(|J|-1)/|J|!$. 
By Proposition~\ref{calc},
\[
\int_N \frac{n!\,X_1\cdots X_n\, da_1\cdots da_n}{(1+a_1X_1+\cdots+a_nX_n)^{n+1}}
=\sum_{|J|\ge 2} \frac{(|J|-1) \prod_{j\in J} X_j}{\prod_{j\in J} (1+\uf_j\cdot \uX)}\quad.
\]
We have to verify that this equals the class $\iota_* s(S,V)$. This Segre class is
computed in~\cite{MR2001i:14009}, \S2.2 (p.~4002):
\[
\iota_* s(S,V)=\left(\left(
1-\frac{c(\cL^\vee)}{c(\cL_1^\vee)\cdots c(\cL_n^\vee)}\right)\cap [V]\right)
\otimes \cL
=\left(1-\frac{c(\cL)^{n-1}}{c(\cL\otimes \cL_1)\cdots c(\cL\otimes \cL_n)}\right)\cap [V]
\]
where $\cL_i=\cO(X_i)$, $\cL=\cO(X_1+\cdots +X_n)$. (This is an instance of the
relation between the Chern-Schwartz-MacPherson class of a hypersurface and the
Segre class of its singularity subscheme, in the particular case of divisors with normal
crossings.) 
Thus, we have to verify that
\begin{equation}\label{eq:unlike}
\sum_{|J|\ge 2} \frac{(|J|-1) \prod_{j\in J} X_j}{\prod_{j\in J} (1+\uf_j\cdot \uX)}
=1-\frac{(1+X_1+\cdots+X_n)^{n-1}}{(1+\uf_1\cdot \uX)\cdots (1+\uf_n\cdot \uX)}\quad.
\end{equation}
This is in fact an identity of rational functions in indeterminates $X_1,\dots,X_n$. 
To prove it, interpret the left-hand side as the value at $t=1$ of
\begin{multline*}
\frac d{dt} \left(\frac 1t \sum_{|J|\ge 2} \prod_{j\in J} \frac{X_j\, t}{1+ \uf_j\cdot \uX}
\right)
=\frac d{dt} \left(\frac 1t \left(\prod_{j=1}^n \left(1+\frac {X_j\, t}{1+\uf_j\cdot \uX}
\right)-1\right)\right)\\
=\frac 1{\prod_j (1+\uf_j\cdot \uX)}\cdot \frac d{dt} \left(\frac 1t \prod_j(1+\uf_j\cdot \uX
+X_j\,t)-\frac 1t \prod_j(1+\uf_j\cdot X)\right)
\end{multline*}
As
\begin{multline*}
\frac d{dt} \left(\frac 1t \prod_j(1+\uf_j\cdot \uX+X_j\,t)-\frac 1t \prod_j(1+\uf_j\cdot X)\right)\\
=-\frac 1{t^2}\prod_j(1+\uf_j\cdot \uX+X_j\,t)
+\frac 1t \prod_j(1+\uf_j\cdot \uX+X_j\,t) \sum \frac {X_j}{1+\uf_j\cdot \uX+X_j\, t}
+\frac 1{t^2}\prod_j (1+\uf_j\cdot \uX)
\end{multline*}
evaluating at $t=1$ gives
\begin{multline*}
-(1+X_1+\cdots +X_n)^n + (1+X_1+\cdots +X_n)^{n-1}(X_1+\cdots+X_n)+\prod_j (1+\uf_j\cdot \uX)\\
= -(1+X_1+\cdots +X_n)^{n-1}+\prod_j (1+\uf_j\cdot \uX)\quad.
\end{multline*}
This shows that the left-hand side of~\eqref{eq:unlike} equals
\[
\frac 1{\prod_j (1+\uf_j\cdot \uX)}\cdot 
\left(\prod_j (1+\uf_j\cdot \uX) - (1+X_1+\cdots +X_n)^{n-1}\right)\quad,
\]
that is, the right-hand side, and concludes the proof of Proposition~\ref{nontri}.
\end{proof}




\begin{thebibliography}{DREPS11}

\bibitem[Alu94]{MR96d:14004}
Paolo Aluffi.
\newblock Mac{P}herson's and {F}ulton's {C}hern classes of hypersurfaces.
\newblock {\em Internat. Math. Res. Notices}, (11):455--465, 1994.

\bibitem[Alu95]{MR97b:14057}
Paolo Aluffi.
\newblock Singular schemes of hypersurfaces.
\newblock {\em Duke Math. J.}, 80(2):325--351, 1995.

\bibitem[Alu99]{MR2001i:14009}
Paolo Aluffi.
\newblock Chern classes for singular hypersurfaces.
\newblock {\em Trans. Amer. Math. Soc.}, 351(10):3989--4026, 1999.

\bibitem[Alu03]{MR1956868}
Paolo Aluffi.
\newblock Computing characteristic classes of projective schemes.
\newblock {\em J. Symbolic Comput.}, 35(1):3--19, 2003.
\newblock Code available at http://www.math.fsu.edu/\char126
  aluffi/CSM/CSM.html.

\bibitem[DREPS11]{MR2736356}
Sandra Di~Rocco, David Eklund, Chris Peterson, and Andrew~J. Sommese.
\newblock Chern numbers of smooth varieties via homotopy continuation and
  intersection theory.
\newblock {\em J. Symbolic Comput.}, 46(1):23--33, 2011.

\bibitem[EJP13]{EJP}
David Eklund, Christine Jost, and Chris Peterson.
\newblock A method to compute {S}egre classes of subschemes of projective
  space.
\newblock {\em J. Algebra Appl.}, 12(2), 2013.

\bibitem[Ful84]{85k:14004}
William Fulton.
\newblock {\em Intersection theory}.
\newblock Springer-Verlag, Berlin, 1984.

\bibitem[Gow05]{MR2165388}
Russell~A. Goward, Jr.
\newblock A simple algorithm for principalization of monomial ideals.
\newblock {\em Trans. Amer. Math. Soc.}, 357(12):4805--4812 (electronic), 2005.

\bibitem[GS]{M2}
Daniel~R. Grayson and Michael~E. Stillman.
\newblock Macaulay2, a software system for research in algebraic geometry.
\newblock Available at \href{http://www.math.uiuc.edu/Macaulay2/}%
  {http://www.math.uiuc.edu/Macaulay2/}.

\bibitem[KK12]{MR2950767}
Kiumars Kaveh and A.~G. Khovanskii.
\newblock Newton-{O}kounkov bodies, semigroups of integral points, graded
  algebras and intersection theory.
\newblock {\em Ann. of Math. (2)}, 176(2):925--978, 2012.

\bibitem[LM09]{MR2571958}
Robert Lazarsfeld and Mircea Musta{\c{t}}{\u{a}}.
\newblock Convex bodies associated to linear series.
\newblock {\em Ann. Sci. \'Ec. Norm. Sup\'er. (4)}, 42(5):783--835, 2009.

\bibitem[{Rod}12]{arXiv1212.2249}
J.~{Rodriguez}.
\newblock {Combinatorial Excess Intersection}.
\newblock 2012, arXiv.1212.2249.

\end{thebibliography}
\end{document}